\theoremstyle{plain}
\newtheorem{theorem}{Theorem}[section]
\newtheorem{proposition}[theorem]{Proposition}
\newtheorem{lemma}[theorem]{Lemma}
\newtheorem{corollary}[theorem]{Corollary}
\theoremstyle{definition}
\newtheorem{definition}[theorem]{Definition}
\newtheorem{example}[theorem]{Example}
\numberwithin{equation}{section}
\def\D{\mathrm{d}}
\def\vu{{\vec{u}}}
\def\vc{{\vec{c}}}
\def\vb{{\vec{b}}}
\def\vv{{\vec{v}}}
\def\ve{{\vec{e}}}
\def\vC{{\vec{C}}}
\def\I{\mathrm{I}}
\def\II{\mathrm{II}}
\def\diag{\operatorname{diag}}
\def\sgn{\operatorname{sgn}}
\def\ad{\operatorname{ad}}
\def\Aut{\operatorname{Aut}}
\def\rk{\operatorname{rk}}
\def\R{\mathbb{R}}
\def\C{\mathbb{C}}
\def\fa{\mathfrak{a}}
\def\fk{\mathfrak{k}}
\def\calM{\mathcal{M}}
\def\fp{\mathfrak{p}}
\def\fu{\mathfrak{u}}
\def\fo{\mathfrak{o}}
\def\rO{\mathrm{O}}
\def\rSO{\mathrm{SO}}
\def\rSU{\mathrm{SU}}
\def\onk{\frac{\rO(2n-k, k)}{\rO(n)\times \rO(n-k, k)}}
\def\oni{\frac{\rO(2n+1-k, k)}{\rO(n+1)\times \rO(n-k,k)}}
\def\onm{\frac{\rO(2n+m-k, k)}{\rO(n+m)\times \rO(n-k, k)}}
\def\ik{isothermic$_k$}
\def\gk{Guichard$_k$}
\def\CT{Combescure transform}
\def\CSGC{Combescure sequence of \gk\ orthogonal co-ordinate systems}
\def\CSGCs{Combescure sequences of \gk\ orthogonal co-ordinate systems}
\def\CSI{Combescure sequence of \ik\ hypersurfaces}
\def\CSIs{Combescure sequences of \ik\ hypersurfaces}
\def\CS{Combescure sequence}
\def\CSs{Combescure sequences}
\begin{document}

\title[Isothermic Hypersurfaces]
{Isothermic Hypersurfaces in $\R^{n+1}$}
\author{Neil Donaldson$^\dag$}\thanks{$^\dag$Research supported in part by NSF Advance Grant\/}
\address{Department of Mathematics\\
University of California at Irvine, Irvine, CA 92697-3875}
\email{ndonalds@math.uci.edu}
\author{Chuu-Lian Terng$^*$}\thanks{$^*$Research supported
in part by NSF Grant DMS-0707132}
\address{Department of Mathematics\\
University of California at Irvine, Irvine, CA 92697-3875}
\email{cterng@math.uci.edu}
\maketitle

\begin{abstract} 
A diagonal metric $\sum_{i=1}^n g_{ii} \D x_i^2$ is termed \emph{\gk} if $\sum_{i=1}^{n-k}g_{ii}-\sum_{i=n-k+1}^n g_{ii} =0$. A hypersurface in $\R^{n+1}$ is \emph{\ik} if it admits line of curvature co-ordinates such that its induced metric is \gk. Isothermic$_1$ surfaces in $\R^3$ are the classical isothermic surfaces in $\R^3$. Both \ik\ hypersurfaces in $\R^{n+1}$ and \gk\ orthogonal co-ordinate systems on $\R^n$ are invariant under conformal transformations. A sequence of $n$ \ik\ hypersurfaces in $\R^{n+1}$ (\gk\ orthogonal co-ordinate systems on $\R^n$ resp.) is called a Combescure sequence if the consecutive hypersurfaces (orthogonal co-ordinate systems resp.) are related by Combescure transformations. We give a correspondence between Combescure sequences of \gk\ orthogonal co-ordinate systems on $\R^n$ and solutions of the $\onk$-system, and a correspondence between Combescure sequences of \ik\ hypersurfaces in $\R^{n+1}$ and solutions of the $\oni$-system, both being integrable systems. Methods from soliton theory can therefore be used to construct Christoffel, Ribaucour, and Lie transforms, and to describe the moduli spaces of these geometric objects and their loop group symmetries.   
\end{abstract}

\section{Introduction}

A parameterised surface $f(x_1, x_2)$ in $\R^3$ is \emph{isothermic} if $(x_1, x_2)$ is a conformal line of curvature co-ordinate system. For example, constant mean curvature surfaces in $\R^3$ are isothermic away from umbilic points. Classical geometers constructed various geometric transforms for these surfaces that gave methods to generate new isothermic surfaces from a given one. The Christoffel transform associates to each isothermic surface $f_1(x_1,x_2)$ a second isothermic surface $f_2(x_1, x_2)$ (a Christoffel dual of $f_1$) such that the principal curvature directions of $f_1$ and $f_2$ are parallel and the map $f_1(x)\mapsto f_2(x)$ is orientation reversing. A Ribaucour transform is a diffeomorphism $\phi:M\to M^*$ between two surfaces in $\R^3$ satisfying: (i) the normal line of $M$ at $p$ intersects the normal line of $M^*$ at $p^*=\phi(p)$ at equal distance for all $p\in M$, (ii) $\phi$ maps principal directions of $M$ to those of $M^*$. Given an isothermic surface in $\R^3$, one can solve a system of compatible ordinary differential equations to construct a two-parameter family of Ribaucour transforms so that the target surfaces are also isothermic; these transforms are known classically as Darboux transforms. These geometric transforms provide a rich class of isothermic surfaces. The connection of isothermic surfaces to soliton theory was first noted in \cite{Cieslinski1995}, that the Gauss--Codazzi equation for isothermic surfaces in $\R^3$ has a Lax pair and is a soliton equation. Techniques from soliton theory have been used to identify classical transformations of isothermic surfaces in \cite{Cieslinski1997,HertrichJeromin1997}.

A notion of isothermic surfaces in $\R^n$ was introduced in \cite{Bruck2002,Burstall2004}: An immersion $f(x_1,x_2)$ in $\R^n$ is \emph{isothermic} if the normal bundle is flat and $(x_1,x_2)$ are conformal line of curvature co-ordinates. It was proved in \cite{Bruck2002,Burstall2004} that the Gauss--Codazzi equation for isothermic surfaces in $\R^n$ is the soliton equation associated to $\frac{\rO(n+1, 1)}{\rO(n)\times \rO(1,1)}$, and analogues of classical transformations were constructed for these higher co-dimension isothermic surfaces.  

Burstall asked in \cite{Burstall2004}: Is there any interesting theory of isothermic submanifolds of $\R^n$ of dimension greater than two? An attempt was made by Tojeiro in \cite{Tojeiro2006}: A parameterised submanifold $f(x)$ in Euclidean space is \emph{$k$-isothermic} if the normal bundle is flat, $x$ is line of curvature co-ordinates, the induced metric is conformal to a Riemannian product, and there are exactly $k$ distributions $E_1, \ldots, E_k$ such that $TM= \oplus_{i=1}^k E_i$ and each $E_i(x)$ is contained in the common eigenspace of the shape operators $\{A_\vv\vert\vv\in \nu(M)_x\}$. However, the class of $k$-isothermic submanifolds is not as rich and does not have all the geometric transforms that isothermic surfaces have. 

In this paper we give a positive answer to Burstall's question. We first define  Guichard diagonal metrics. Fix $1\leq k\leq n-1$. A diagonal metric $\sum_{i=1}^n g_{ii} \D x_i^2$ is \emph{\gk} if 
\[\sum_{i=1}^{n-k}g_{ii}=\sum_{i=n-k+1}^n g_{ii}.\] 
Let $\I_{n-k,k}=\diag(\epsilon_1, \ldots, \epsilon_n)$ with $\epsilon_i= 1$ for $i\leq n-k$ and $\epsilon_i=-1$ for $n-k< i\leq n$, and $(\ ,\ )_k$ the bilinear form on $\C^n$ defined by
\[(\vec x,\vec y)_k=\vec x^T\I_{n-k, k}\vec y.\]
If $\D s^2=\sum_{i=1}^n u_i^2 \D x_i^2$ is a diagonal metric, then $\D s^2$ is \gk\ if and only if $(\vu,\vu)_k=0$, where $\vu=(u_1,\ldots,u_n)^T$.
We call $\vu$ a \emph{metric field} for $\D s^2$.

An orthogonal co-ordinate system $\phi(x)$ on $\R^n$ is \emph{\gk} if $\phi^*(\D s_0^2)$ is \gk, where $\D s_0^2$ is the standard Euclidean metric on $\R^n$. Let $\phi(x),\psi(x)$ be orthogonal co-ordinate systems on $\R^n$. Classically, a map $\phi(x)\mapsto \psi(x)$ is said to be a \emph{Combescure transform} if $\phi_{x_i}(x)$ is parallel to $\psi_{x_i}(x)$ for all $1\leq i\leq n$. 

A hypersurface in $\R^{n+1}$ is \emph{\ik} if it admits line of curvature co-ordinates such that its induced metric is \gk. It follows from the definition that \ik\ hypersurfaces in $\R^{n+1}$ and \gk\ orthogonal co-ordinate systems on $\R^n$ are invariant under conformal transformations. Note that isothermic$_1$ surfaces in $\R^3$ are the classical isothermic surfaces in $\R^3$.  

Let $f_1,f_2$ be two isothermic hypersurfaces in $\R^{n+1}$. The map $f_1(x)\mapsto f_2(x)$ is called a \emph{\CT} if $(f_1)_{x_i}$ is parallel to $(f_2)_{x_i}$ for all $1\leq i\leq n$.  

\begin{definition} \label{fk}
A sequence $\{\phi_1,\ldots,\phi_n\}$ of \gk\ orthogonal co-ordinate systems on $\R^n$ is called a \emph{Combescure sequence} if (i) $\phi_i(x)\mapsto\phi_{i+1}(x)$ is a \CT\ for all $1\leq i\leq n-1$, (ii) $\vu_1,\ldots,\vu_n$ are linearly independent at every point, where $\vu_i=(u_{i1},\ldots,u_{in})^T$ are metric fields for $\phi_i^*(\D s_0^2)$ (i.e., $\phi_i^*(\D s_0^2)=\sum_{j=1}^n u_{ij}^2\D x_j^2$) chosen in such a way that 
\begin{equation}\label{fa}
\frac{(\phi_i)_{x_j}}{u_{ij}} = \frac{(\phi_1)_{x_j}}{u_{1j}},\qquad 1\leq i, j\leq n.
\end{equation}
\end{definition}

Condition (i) implies that given a choice of metric field $\vu_1$ for $\phi_1^*(\D s_0^2)$, there exist unique metric fields $\vu_i$ for $\phi_i^*(\D s_0^2)$ satisfying \eqref{fa}. Note that the condition that $\vu_1,\ldots,\vu_n$ are independent is independent of choices of $\vu_1$. 

\begin{definition}\label{fb}
A sequence of isothermic hypersurfaces $\{f_1,\ldots,f_n\}$ in $\R^{n+1}$ is called a \emph{Combescure sequence} if (i) $f_i(x)\mapsto f_{i+1}(x)$ is a \CT\ for all $1\leq i\leq n-1$, (ii) $\vu_1,\ldots,\vu_n$ are linearly independent at every point, where $\vu_i=(u_{i1},\ldots,u_{in})^T$ are chosen such that the induced metrics $\I_{f_i}=\sum_{j=1}^n u_{ij}^2\D x_j^2$ satisfy $\frac{(f_i)_{x_j}}{u_{ij}}= \frac{(f_1)_{x_j}}{u_{1j}}$ for all $1\leq i,j\leq n$.
\end{definition}

Next we explain the relation between Combescure sequences and soliton equations. To each symmetric space $U/K$ is associated a soliton hierarchy. The flows in the hierarchy are parameterised by $(\alpha,j)$ with $1\leq\alpha\leq\rk(U/K)$ (the rank of $U/K$) and $j$ a positive integer. Each flow is equivalent to the condition that a family of $\fu^\C$-valued connection 1-forms $\{\theta_\lambda\vert\lambda\in\C\}$ is flat: this family is the \emph{Lax pair} of the flow. In fact, the Lax pair $\theta_\lambda$ of the $(\alpha,j)$-th flow is a degree $j$ polynomial in $\lambda$. For example, the $\rSU(2)$, $\frac{\rSU(2)}{\rSO(2)}$- and $\frac{\rSU(3)}{\rSO(3)}$ hierarchies are hierarchies for the non-linear Schr{\"o}dinger equation, modified KdV equation, and the reduced 3-wave equation respectively. We put all $(\alpha,1)$-th flows with $1\leq\alpha\leq\rk(U/K)$ together to construct a first order non-linear system, the so-called $U/K$-system (cf. \cite{Terng1997}). These integrable systems often arise in submanifold geometry. 

We call a basis $\vC=\{\vc_1,\ldots,\vc_n\}$ of $\R^{n-k,k}$ a \emph{null basis} if each $\vc_i$ is a null vector. One main result of this paper is to give a construction of a \CSI\ $\{f_1^{\xi,\vC},\ldots,f_n^{\xi,\vC}\}$ from a solution $\xi$ of the $\oni$-system and a null basis $\vC$ of $\R^{n-k,k}$. Moreover, we show that:
\begin{enumerate}
\item All \CSIs\ arise from this construction.
\item If $\vec B,\vC$ are two null bases for $\R^{n-k, k}$, then $f_i^{\xi,\vC}(x)\to f_i^{\xi,\vec B}(x)$ is again a \CT\ for all $1\leq i\leq n$.
\item Let $f_1,\ldots,f_{n-1}$ be \ik\ hypersurfaces in $\R^{n+1}$ such that (i) $f_i(x)\mapsto f_{i+1}(x)$ is Combescure for all $1\leq i\leq n-2$, (ii) $\vu_1,\ldots,\vu_{n-1}$ are linearly independent at every point, where $\vu_\ell=(u_{\ell 1},\ldots,u_{\ell n})$ is the metric field for the induced metric $\I_{f_\ell}$ chosen so that $\frac{(f_\ell)_{x_j}}{u_{\ell j}}=\frac{(f_1)_{x_j}}{u_{1j}}$ for all $2\leq \ell \leq n-1$. Then there exists an \ik\ hypersurface $f_n$ so that $\{f_1,\ldots,f_n\}$ is a \CSI\ in $\R^{n+1}$.
\item If $(f_1,f_2)$ is a Combescure sequence of isothermic$_1$ surfaces in $\R^3$, then either $(f_1,f_2)$ is a classical Christoffel pair, or $(f_1,g_2)$ is, where $g_2(x_1,x_2)=f(x_1,-x_2)$ is oppositely oriented to $f_1$.
\end{enumerate}
We have similar results for \CSGCs\ and solutions of the $\onk$-system.  

We apply techniques from soliton theory to construct loop group actions, Ribaucour transforms, and Lie transforms for Combescure sequences of \ik\ hypersurfaces. We prove that Christoffel and Ribaucour transforms commute, and that the conjugation of a Ribaucour transform and a Lie transform is again Ribaucour.  Moreover, our results generalise those for isothermic surfaces.

Results in \cite{Terng1997} and \cite{Terng2005} imply that Combescure sequences of \ik\ hypersurfaces are determined by $n^2$ functions of one variable and a null basis of $\R^{n-k,k}$.

An $n$-dimensional submanifold in $\R^{n+m}$ is \emph{\ik} if it has flat normal bundle and line of curvature co-ordinates such that the first fundamental form is \gk. Isothermic$_k$ submanifolds in $\R^{n+m}$ are invariant under conformal transformations. All results for the co-dimension one case generalise easily to higher co-dimension.

All \gk\ co-ordinate systems and \ik\ hypersurfaces in this paper are defined on simply connected open subsets of $\R^n$. Although the Cauchy problems for the $\onk$- and $\oni$- systems with rapidly decaying initial data on a non-characteristic line have global solutions, the corresponding co-ordinate systems and hypersurfaces may have singularities.  
 
\medskip
The paper is organised as follows: We write down the $\onk$- and $\frac{\rO(2n+m-k, k)}{\rO(n+1)\times \rO(n+m-k, k)}$-systems and their Lax pairs in section 2, give the correspondence between solutions of the $\onk$-system and \CSs\ of \gk\  orthogonal co-ordinate systems on $\R^n$ in section 3. In section 4 we describe the correspondence between solutions of the $\oni$-system and \CSs\ of \ik\ hypersurfaces in $\R^{n+1}$, and construct Christoffel transforms of these. In the final section we (a) use the dressing action of a rational loop with two poles on the space of solutions of integrable systems to construct geometric Ribaucour transforms of \CSs\ of \ik\ hypersurfaces and prove that Christoffel and Ribaucour transforms commute, and (b) construct Lie transforms of \CSs\ of \ik\ hypersurfaces and show that the conjugation of a Ribaucour transform by a Lie transform is again a Ribaucour transform. These geometric transforms can easily be generalised to \CSs\ of \gk\ orthogonal co-ordinate systems on $\R^n$ and \CSs\ of \ik\ submanifolds in Euclidean space. 
 
 \section{The \texorpdfstring{$\onk$- and $\oni$-}{U/K-}systems}\label{as}
 
Let $U$ be a real simple Lie group, $\sigma$ an involution of $U$ (we also use $\sigma$ to denote $\D \sigma_e$ on the Lie algebra $\fu$ of $U$), $K$ the fixed point set of $\sigma$, and $\fk,\fp$ the $\pm 1$ eigenspaces of $\sigma$ on $\fu$. Let $\fa$ be a maximal abelian\footnote{Throughout this paper maximal abelian will mean maximal \emph{semisimple} abelian: when $U/K$ is non-Riemannian there will generally be non-semisimple abelian subalgebras of dimension greater than the rank of $U/K$. E.g. $\fp\cap\mathrm{stab}(\ell)$ where $\ell$ is an isotropic line in $\R^{1,1}$ is 3-dimensional abelian for the rank 2 space $\frac{\rO(4,1)}{\rO(3)\times\rO(1,1)}$.} subalgebra in $\fp$, $\{a_1, \ldots, a_n\}$ a basis of $\fa$, and $\fa^\perp$ the orthogonal complement of $\fa$ with respect to the Killing form. The $U/K$-system \cite{Terng1997} is the following PDE for $v:\R^n\to \fa^\perp\cap\fp$,
 \[[a_i, v_{x_j}]-[a_j, v_{x_i}]-[[a_i, v], [a_j, v]]=0, \quad i\not=j.\]
Set 
\begin{equation}\label{bn}
\theta_\lambda= \sum_{i=1}^n (a_i \lambda + [a_i, v])\D x_i.
\end{equation}
The following Proposition is well-known and the proof follows easily from a direct computation:
 
 \begin{proposition}\label{cc}
  The following statements are equivalent for a smooth map $v:\R^n\to \fa^\perp\cap \fp$ and $\theta_\lambda$ defined by \eqref{bn}:
 \begin{enumerate}
 \item $v$ is a solution of the $U/K$-system,
 \item $\theta_\lambda$ is flat for all $\lambda\in\C$,
 \item $\theta_r$ is flat for some $r\in \R\cup i\R$.
 \end{enumerate}
 \end{proposition}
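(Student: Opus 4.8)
The plan is to compute the curvature of the connection $1$-form $\theta_\lambda$ directly and read all three equivalences off from that single calculation. I would write $\theta_\lambda=\sum_{i=1}^n A_i\,\D x_i$ with $A_i=a_i\lambda+[a_i,v]$, and recall that $\theta_\lambda$ is flat if and only if its curvature vanishes, i.e.\ if and only if the zero-curvature equations
\[
(A_j)_{x_i}-(A_i)_{x_j}+[A_i,A_j]=0,\qquad i\neq j,
\]
hold (equivalently, since the domain is simply connected, if and only if $E^{-1}\D E=\theta_\lambda$ has a solution $E\colon\R^n\to U^\C$).

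The crux is to observe that this curvature is independent of $\lambda$. First, because the $a_i$ are constant, $(A_j)_{x_i}-(A_i)_{x_j}=[a_j,v_{x_i}]-[a_i,v_{x_j}]$ carries no $\lambda$. Expanding the commutator bilinearly,
\[
[A_i,A_j]=\lambda^2[a_i,a_j]+\lambda\big([a_i,[a_j,v]]+[[a_i,v],a_j]\big)+[[a_i,v],[a_j,v]],
\]
the $\lambda^2$-coefficient $[a_i,a_j]$ vanishes because $\fa$ is abelian, and the $\lambda$-coefficient equals $[a_i,[a_j,v]]-[a_j,[a_i,v]]=[[a_i,a_j],v]=0$ by the Jacobi identity. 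Hence the zero-curvature expression reduces to the $\lambda$-free quantity
\[
[a_j,v_{x_i}]-[a_i,v_{x_j}]+[[a_i,v],[a_j,v]],
\]
which for all $i\neq j$ vanishes precisely when $v$ satisfies the $U/K$-system (up to an overall sign it is the $(i,j)$-th equation of that system). As a sanity check I would note that $v$ is $\fp$-valued, so $[a_i,v]\in\fk$ and $[[a_i,v],[a_j,v]]\in\fk$, whence the curvature is $\fk$-valued, consistent with the $U/K$-system being an $\fk$-valued equation.

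With this in hand all three implications are immediate: since the curvature does not depend on $\lambda$, flatness of $\theta_r$ for one $r\in\R\cup i\R$ is the same as flatness of $\theta_\lambda$ for every $\lambda\in\C$, which gives $(2)\Leftrightarrow(3)$; and the displayed reduction shows that this common curvature vanishes for all $i\neq j$ exactly when $v$ solves the $U/K$-system, giving $(1)\Leftrightarrow(2)$. I do not expect any genuine obstacle here --- the computation is routine, as the statement itself advertises --- the only points calling for care being the order-$\lambda$ cancellation, which uses both $[a_i,a_j]=0$ and the Jacobi identity, and keeping track of the overall sign so that the curvature matches the stated form of the $U/K$-system.
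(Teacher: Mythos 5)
Your proof is correct and is precisely the direct computation that the paper alludes to (the paper omits the details, stating only that the result is well-known and follows from a direct computation): the curvature of $\theta_\lambda$ is independent of $\lambda$ because the $\lambda^2$-term dies by abelianness of $\fa$ and the $\lambda$-term by the Jacobi identity, leaving exactly (the negative of) the $U/K$-system. All three equivalences follow, as you say.
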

 
We call $\theta_\lambda$ the \emph{Lax pair} for the solution $v$ of the $U/K$-system. Note that $\theta_\lambda$ satisfies the {\it $U/K$-reality condition\/}:
\[\overline{\theta_{\bar\lambda}}= \theta_\lambda, \quad \sigma(\theta_{-\lambda})= \theta_\lambda.\]

\begin{definition}
Given a solution $v$ of the $U/K$-system, an \emph{extended frame} $E_\lambda$ for $v$ is a parallel frame of $\theta_\lambda$ (i.e., $E_\lambda^{-1}\D E_\lambda= \theta_\lambda$)  that satisfies the $U/K$-reality condition: 
   \begin{equation} \label{bg}
   \quad \overline{E_{\bar\lambda}}=E_\lambda, \quad \sigma(E_{-\lambda})= E_\lambda.
   \end{equation} 
An extended frame $E$ is called the \emph{normalized extended frame for the solution $v$} if $E(0,\lambda)=\I$.
\end{definition}
 
The following Proposition is well-known (cf. \cite{Bruck2002,Burstall2004}).
 
\begin{proposition}\label{ae} Let $v$ be a solution of the $U/K$-system, $\theta_\lambda= \sum_{i=1}^n (a_i\lambda + [a_i, v])\, \D x_i$ the Lax pair of $v$, and $E_\lambda$ an extended frame. Then:
 \begin{enumerate}
 \item[(1)] $E_0\in K$,
 \item[(2)] The gauge transformation
 \[E_0\ast \theta_\lambda = E_0 \theta_\lambda E_0^{-1} - \D E_0 E_0^{-1}=\lambda \sum_{i=1}^n E_0a_i E_0^{-1}\, \D x_i\]
   is a $\fp$-valued closed $1$-form,
 \item[(3)] $Z:=\frac{\partial E}{\partial\lambda} E^{-1}\, \big|_{\lambda=0}$ is a $\fp$-valued map and 
 $\D Z= \sum_{i=1}^n E_0a_i E_0^{-1}\, \D x_i$. 
 \end{enumerate}
 \end{proposition}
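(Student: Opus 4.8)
The plan is to derive all three statements directly from the reality conditions~\eqref{bg}, the defining relation $E_\lambda^{-1}\D E_\lambda=\theta_\lambda$, and the flatness of $\theta_\lambda$ (Proposition~\ref{cc}); no new idea is needed, only careful bookkeeping. For~(1) I would set $\lambda=0$ in~\eqref{bg}: the condition $\overline{E_{\bar\lambda}}=E_\lambda$ gives $\overline{E_0}=E_0$, so $E_0\in U$, and $\sigma(E_{-\lambda})=E_\lambda$ gives $\sigma(E_0)=E_0$, so $E_0$ lies in the fixed-point set $K$ of $\sigma$.

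For~(2), evaluating $E_\lambda^{-1}\D E_\lambda=\theta_\lambda$ at $\lambda=0$ gives $E_0^{-1}\D E_0=\theta_0=\sum_i[a_i,v]\,\D x_i$, hence $\D E_0\,E_0^{-1}=E_0\theta_0E_0^{-1}$ and
\[
E_0\ast\theta_\lambda=E_0\theta_\lambda E_0^{-1}-\D E_0\,E_0^{-1}=E_0(\theta_\lambda-\theta_0)E_0^{-1}=\lambda\sum_{i=1}^nE_0a_iE_0^{-1}\,\D x_i .
\]
Since $a_i\in\fa\subset\fp$ and $E_0\in K$ --- so $\operatorname{Ad}(E_0)$ commutes with $\sigma$ and preserves $\fp$ --- this $1$-form takes values in $\fp$. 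For closedness I would use that a gauge transformation of a flat connection is flat: $E_0\ast\theta_\lambda$ is flat for every $\lambda$, and writing it as $\lambda\,\omega$ with $\omega=\sum_iE_0a_iE_0^{-1}\,\D x_i$, the vanishing of the $\lambda$-coefficient of its curvature forces $\D\omega=0$ (the $\lambda^2$-coefficient vanishes automatically because the $a_i$ commute). Alternatively the closedness falls out of~(3).

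For~(3) I would differentiate in $\lambda$. Put $\zeta(\lambda)=\partial_\lambda E_\lambda\,E_\lambda^{-1}$ and $\Phi(\lambda)=E_\lambda^{-1}\partial_\lambda E_\lambda$, so that $Z=\zeta(0)$. Along the real $\lambda$-axis the reality conditions force $E_\lambda$ to be $U$-valued, whence $\zeta(\lambda)\in\fu$; and differentiating $\sigma(E_{-\lambda})=E_\lambda$ at $\lambda=0$, using the homomorphism property of $\sigma$ (its differential intertwines right translation with right translation on the image), gives $\sigma(Z)=-Z$, where $\sigma$ now denotes $\D\sigma_e$ on $\fu$; hence $Z\in\fp$. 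To compute $\D Z$ I would differentiate $E_\lambda^{-1}\D E_\lambda=\theta_\lambda$ with respect to $\lambda$, obtaining the mixed zero-curvature identity $\D\Phi-\partial_\lambda\theta_\lambda=[\Phi,\theta_\lambda]$; substituting $\partial_\lambda\theta_\lambda=\sum_ia_i\,\D x_i$ and conjugating by $E_\lambda$ makes the bracket terms cancel, leaving
\[
\D\zeta(\lambda)=\sum_{i=1}^nE_\lambda a_iE_\lambda^{-1}\,\D x_i
\]
for every $\lambda$; at $\lambda=0$ this reads $\D Z=\sum_iE_0a_iE_0^{-1}\,\D x_i$, which in particular re-establishes the closedness claimed in~(2).

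I do not expect a genuine obstacle --- the proposition is a formal manipulation, which is why it is stated as well known. The points that need the most care are: distinguishing the group involution $\sigma$ from its Lie-algebra differential $\D\sigma_e$ when differentiating~\eqref{bg} in $\lambda$; keeping left- and right-logarithmic derivatives straight in the mixed zero-curvature identity; and invoking the reality condition to be sure that $E_\lambda$ is genuinely $U$-valued, not merely $U^\C$-valued, along the real $\lambda$-axis, so that $\zeta(\lambda)\in\fu$.
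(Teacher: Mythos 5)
Your proposal is correct and follows essentially the same route as the paper's (very terse) proof: the gauge computation $E_0\ast\theta_\lambda=E_0(\theta_\lambda-\theta_0)E_0^{-1}=\lambda\Omega$, closedness of $\Omega$ from flatness together with $\Omega\wedge\Omega=0$ (the $a_i$ commuting), and a direct $\lambda$-differentiation of $E_\lambda^{-1}\D E_\lambda=\theta_\lambda$ for part (3). You have simply filled in the "simple computation" and "direct computation" steps that the paper leaves to the reader, and your care with $\sigma$ versus $\D\sigma_e$ and with left/right logarithmic derivatives is exactly where the bookkeeping matters.
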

 
\begin{proof}
A simple computation implies that $E_0\ast \theta_\lambda = \lambda \Omega$. Since $\Omega$ is flat and $\Omega\wedge \Omega=0$, $\D\Omega=0$. The third statement follows from a direct computation.\qed
\end{proof}
 
 \begin{example}[The $\onk$-system]\label{ac}
 Here the involution on $\rO(2n-k, k)$ is $\sigma(g)= \I_{n,n} g\I_{n, n}^{-1}$, and
\[\fp=\left\{\begin{pmatrix} 0 & -\xi^TJ\\ \xi & 0\end{pmatrix}\, \bigg| \, \xi \in \calM_{n\times n}  \right\}, \qquad J=\I_{n-k,k},\]
where $\calM_{n\times n}$ is the space of real $n\times n$ matrices. Set 
\[a_i =\begin{pmatrix} 0& -e_{ii} J\\ e_{ii} & 0\end{pmatrix}, \quad 1\leq i\leq n.\]
 Then $\{a_1, \ldots, a_n\}$ is a basis of a maximal abelian subalgebra $\fa$ in $\fp$, and moreover 
\[\fa^\perp\cap \fp=\left\{\begin{pmatrix} 0& -F^T J\\ F& 0\end{pmatrix}\,\bigg| \, F=(f_{ij}), f_{ii}=0 \quad \forall\,\, 1\leq i\leq n  \right\}.\]
 The Lax pair for the $\onk$-system is 
\begin{equation}\label{fm}
\theta_\lambda= \begin{pmatrix} \omega & -\lambda\delta J\\ \lambda \delta & \tau\end{pmatrix}, \quad \omega= -\delta JF+ F^T J\delta, \quad \tau= -\delta F^TJ+ F\delta J,
\end{equation}
where $\delta=\diag(\D x_1,\ldots,\D x_n)$.

The $\onk$-system is given by the condition that $\theta_0$ is flat, i.e.,
\[\D \omega= -\omega\wedge \omega, \quad \D \tau= -\tau\wedge \tau.\]
 In other words, the $\onk$-system for $F=(f_{ij})$ is
\[\begin{cases} \epsilon_i (f_{ij})_{x_j} + \epsilon_j (f_{ji})_{x_i} -\epsilon_i\epsilon_j \sum_k f_{ik} f_{jk}=0, &i\neq =j,\\
 (f_{ij})_{x_i} + (f_{ji})_{x_j} -\sum_k \epsilon_k f_{ki} f_{kj}=0, & i\neq j,\\
 (f_{ij})_{x_k} + \epsilon_k f_{ik} f_{kj}=0, & i, j, k \quad \text{ distinct}.
 \end{cases}\]
 \end{example}
 
 \begin{example}[The $\oni$-system]\label{ad}
 Here $\sigma(g)= \I_{n+1, n}g\I_{n+1,n}^{-1}$, 
 \[\fp=\left\{\begin{pmatrix} 0& -\xi^T J\\ \xi & 0\end{pmatrix}\, \bigg|\, \xi\in \calM_{n, n+1} \right\},\]
 where $J=\I_{n-k,k}$. Set 
\[a_i= \begin{pmatrix} 0& -A_i^T J\\ A_i & 0\end{pmatrix}, \quad A_i=(e_{ii}, 0), \quad 1\leq i\leq n.\]
 Then $a_1, \ldots, a_n$ forms a basis of a maximal abelian subalgebra $\fa$ in $\fp$, and 
 \[\fa^\perp\cap \fp=\left\{ \begin{pmatrix} 0 & -\xi^TJ\\ \xi & 0\end{pmatrix}\bigg|\, \xi=(F, \gamma), F=(f_{ij}), f_{ii}=0, \,\, \forall \,\, 1\leq i\leq n\right\}.\]
 The $\oni$-system is the PDE for $\xi= (F, \gamma)$ with $F=(f_{ij})$ and $\gamma=(\gamma_1, \ldots, \gamma_n)^T$ such that $f_{ii}=0$ for all $1\leq i\leq n$.
 The Lax pair for the $\oni$-system is
\begin{equation}\label{cf}
 \theta_\lambda= \begin{pmatrix} \omega& -\lambda \begin{pmatrix} \delta\\ 0\end{pmatrix}\, J\\ \lambda (\delta, 0) & \tau \end{pmatrix}, 
 \end{equation}
where $J= \I_{n-k,k}$, $\delta=\diag(\D x_1, \ldots, \D x_n)$, and 
\begin{equation}\label{ap}
\omega= \begin{pmatrix} -\delta JF+ F^T J\delta& -\delta J\gamma\\ \gamma^TJ\delta &0 \end{pmatrix},\qquad \tau= (F\delta- \delta F^T)J, 
\end{equation}
Note that $\omega$ and $\tau$ are $\fo(n+1)$-valued and $\fo(n-k,k)$-valued $1$-forms respectively, and that the $\oni$-system is given by the flatness of $\omega$ and $\tau$. In other words, the $\oni$-system is the following PDE for $F=(f_{ij})$ and $\gamma= (\gamma_1, \ldots, \gamma_n)^T$:
 \begin{equation}\label{aj}
 \begin{cases} \epsilon_i (f_{ij})_{x_j} + \epsilon_j (f_{ji})_{x_i} -\epsilon_i\epsilon_j \sum_k f_{ik} f_{jk}=0, &i\neq j,\\
 (f_{ij})_{x_i} + (f_{ji})_{x_j} -\sum_k \epsilon_k f_{ki} f_{kj}=0, & i\neq j,\\
 (f_{ij})_{x_k} + \epsilon_k f_{ik} f_{kj}=0, & i, j, k \quad \text{ distinct},\\
 (\gamma_i)_{x_j}+ \epsilon_j f_{ij} \gamma_j=0,& i\neq j.
 \end{cases}
 \end{equation}
 \end{example}
  
 \begin{example}[The $\onm$-system]
 The $\onm$-system is the PDE for $\xi=(F, \gamma)$ defined by the flatness of $\omega$ and $\tau$, where $\omega$ and $\tau$ are defined in terms of $F, \gamma$ the same way as in \eqref{ap},  $F=(f_{ij})$ is  an $n\times n$ map with $f_{ii}=0$ for all $1\leq i\leq n$, and $\gamma$ a $n\times m$-valued map.  The Lax pair has the same form as \eqref{cf}.
 \end{example}

 \section{Guichard orthogonal co-ordinate systems}\label{at}
 
We exhibit a relation between \CSs\ of \gk\ orthogonal co-ordinate systems on $\R^n$ and solutions of the $\onk$-system.  

First recall a simple and well-known Lemma (the proof follows from a direct computation): 

\begin{lemma}\label{da}
Let $\phi(x)$ be an orthogonal system on $\R^n$, and $\phi^*(\D s_0^2)=\sum_{i=1}^n u_i^2\D x_i^2$, where $\D s_0^2$ is the standard Euclidean metric on $\R^n$. Set 
\[\ve_i=\frac{\phi_{x_i}}{u_i},\quad g=(\ve_1,\ldots,\ve_n),\quad \omega_i=u_i\D x_i,\]
and $F=(f_{ij})$ with
\[f_{ij}=
\begin{cases}
-\epsilon_i\frac{(u_i)_{x_j}}{u_j}, & i\not=j,\\
0, & i=j. 
\end{cases}\]
Then $\omega:=(\omega_{ij})= g^{-1}\D g$ is the flat Levi-Civita connection 1-form for $\phi^*(\D s_0^2)$ and
\[\omega_{ij}=\frac{(u_i)_{x_j}}{u_j} \D x_i - \frac{(u_j)_{x_i}}{u_i}\D x_j =\epsilon_i f_{ij} \D x_i + \epsilon_jf_{ji}\D x_j,\] or equivalently, 
$\omega=(\omega_{ij})= -\delta JF+ F^T J\delta$, where $\delta= \diag(\D x_1,\ldots,\D x_n)$.
\end{lemma}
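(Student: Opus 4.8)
The plan is to carry out the direct computation the remark promises, arranging it so that the only nontrivial input is isolated. First I would record three elementary facts. Since $\phi$ is an orthogonal coordinate system with $\phi^*(\D s_0^2)=\sum_i u_i^2\,\D x_i^2$, the vectors $\ve_i=\phi_{x_i}/u_i$ form a pointwise orthonormal frame of $\R^n$; hence $g=(\ve_1,\ldots,\ve_n)$ is $\rO(n)$-valued, so $\omega=g^{-1}\D g=g^T\D g$ is skew-symmetric, that is, $\fo(n)$-valued, and it satisfies $\D\omega+\omega\wedge\omega=0$ because it is a Maurer--Cartan form (equivalently, $\D+\omega$ is the flat ambient connection written in the moving frame $(\ve_i)$). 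That this flat connection is the Levi-Civita connection of $\phi^*(\D s_0^2)$ relative to the orthonormal coframe $\omega_i=u_i\,\D x_i$ then reduces to checking the first structure equation $\D\omega_i+\sum_j\omega_{ij}\wedge\omega_j=0$, which drops out once the explicit formula for $\omega_{ij}$ is in hand.

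To compute the entries I would use $\omega_{ij}=\langle\ve_i,\D\ve_j\rangle=\sum_k\langle\ve_i,(\ve_j)_{x_k}\rangle\,\D x_k$, expand $(\ve_j)_{x_k}=\phi_{x_jx_k}/u_j-((u_j)_{x_k}/u_j)\,\ve_j$, and use $\langle\ve_i,\ve_j\rangle=\delta_{ij}$ to reduce, for $i\neq j$, to $\langle\ve_i,(\ve_j)_{x_k}\rangle=(u_iu_j)^{-1}\langle\phi_{x_i},\phi_{x_jx_k}\rangle$. The cases $k=i$, $k=j$, and $k\notin\{i,j\}$ are then handled by differentiating $\langle\phi_{x_a},\phi_{x_b}\rangle=u_a^2\delta_{ab}$: differentiating $\langle\phi_{x_i},\phi_{x_i}\rangle=u_i^2$ in $x_j$ gives $\langle\phi_{x_i},\phi_{x_ix_j}\rangle=u_i(u_i)_{x_j}$, producing the $\D x_i$-coefficient $(u_i)_{x_j}/u_j$; differentiating $\langle\phi_{x_i},\phi_{x_j}\rangle=0$ in $x_j$ together with $\langle\phi_{x_j},\phi_{x_jx_i}\rangle=u_j(u_j)_{x_i}$ gives the $\D x_j$-coefficient $-(u_j)_{x_i}/u_i$. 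This yields $\omega_{ij}=\frac{(u_i)_{x_j}}{u_j}\,\D x_i-\frac{(u_j)_{x_i}}{u_i}\,\D x_j$, after which substituting $f_{ij}=-\epsilon_i(u_i)_{x_j}/u_j$ and its transpose and assembling over $i,j$ — with $\delta=\diag(\D x_1,\ldots,\D x_n)$ and $J=\I_{n-k,k}$, and using $\epsilon_i^2=1$ and $f_{ii}=0$ — gives the matrix form $\omega=-\delta JF+F^TJ\delta$. This last step is pure sign bookkeeping.

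The step I expect to be the only real obstacle — everything else being forced — is the vanishing of $\langle\phi_{x_i},\phi_{x_jx_k}\rangle$ for $i,j,k$ pairwise distinct, which is what makes the $\D x_k$ term ($k\neq i,j$) disappear from $\omega_{ij}$. I would obtain it from the three identities got by differentiating $\langle\phi_{x_i},\phi_{x_j}\rangle=0$ in $x_k$, $\langle\phi_{x_j},\phi_{x_k}\rangle=0$ in $x_i$, and $\langle\phi_{x_i},\phi_{x_k}\rangle=0$ in $x_j$: writing $a=\langle\phi_{x_i},\phi_{x_jx_k}\rangle$, $b=\langle\phi_{x_j},\phi_{x_kx_i}\rangle$, $c=\langle\phi_{x_k},\phi_{x_ix_j}\rangle$ and using equality of mixed partials, these read $a+b=0$, $b+c=0$, $c+a=0$, forcing $a=b=c=0$. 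This small cyclic argument is the one piece of content; the rest is the "direct computation" the statement advertises.
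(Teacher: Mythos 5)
Your proof is correct and is precisely the direct computation the paper alludes to (the paper omits the proof of this lemma entirely): the cyclic argument forcing $\langle\phi_{x_i},\phi_{x_jx_k}\rangle=0$ for pairwise distinct indices is indeed the one substantive step, and everything else is the standard moving-frame bookkeeping. One minor remark: the matrix form $\omega=-\delta JF+F^TJ\delta$ you arrive at is the correct (skew-symmetric) one, which incidentally shows that the paper's intermediate expression $\epsilon_i f_{ij}\,\D x_i+\epsilon_j f_{ji}\,\D x_j$ carries a sign typo and should read $-\epsilon_i f_{ij}\,\D x_i+\epsilon_j f_{ji}\,\D x_j$.
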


\begin{theorem}\label{ak}
Let $F=(f_{ij})$ be a solution of the $\onk$-system, and $E_\lambda$ an extended frame of $F$. Then:
\begin{enumerate}
\item[(a)] $E_0=\begin{pmatrix} g_1&0\\ 0 & g_2\end{pmatrix} \in \rO(n)\times \rO(n-k,k)$.  
\item[(b)] $\frac{\partial E}{\partial \lambda} E^{-1}\,\big|_{\lambda=0}$ is of the form $\begin{pmatrix} 0& Y\\ -JY^T &0\end{pmatrix}$ for some $\calM_{n\times n}$-valued map $Y$ and $\D Y= -\sum_{i=0}^n g_1 e_{ii}Jg_2^{-1}\D x_i$.
\item[(c)] Let $\vc\in \R^{n-k,k}$ be a constant null vector. Then $Y\vc$ is a \gk\ orthogonal co-ordinate system on $\R^n$ with $\D s^2=\sum_{i=1}^n u_i^2 \D x_i^2$ defined on the open subset $\{x\in \R^n\vert \prod_{i=1}^n u_i(x)\not=0\}$, where  $(u_1,\ldots,u_n)^T:= Jg_2^{-1}\vc$.
\item[(d)] If $\vC=\{\vc_1,\ldots,\vc_n\}$ is a null basis for $\R^{n-k,k}$, then $Y\vC=\{Y\vc_1,\ldots,Y\vc_n\}$ is a \CSGC\ on $\R^n$.
\item[(e)] If $\{\vb_1,\ldots,\vb_n\}$ is another null basis for $\R^{n-k,k}$, then $Y(x)\vc_i\mapsto Y(x)\vb_j$ is a Combescure transform for all $1\leq i,j\leq n$. 
\end{enumerate}
\end{theorem}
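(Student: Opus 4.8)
The plan is to run all five parts off the structure of the Lax pair $\theta_\lambda$ in \eqref{fm} together with Proposition~\ref{ae}. First I would prove (a): since $F$ is a solution of the $\onk$-system, by Proposition~\ref{cc} $\theta_\lambda$ is flat, and by Proposition~\ref{ae}(1) the value $E_0$ lies in $K=\rO(n)\times\rO(n-k,k)$; the block-diagonal form follows because $\sigma(g)=\I_{n,n}g\I_{n,n}^{-1}$ so $K$ consists of matrices commuting with $\I_{n,n}$, i.e. block-diagonal with blocks in $\rO(n)$ and $\rO(n-k,k)$. Write $E_0=\diag(g_1,g_2)$. For (b), Proposition~\ref{ae}(3) says $Z:=\frac{\partial E}{\partial\lambda}E^{-1}\big|_{\lambda=0}$ is $\fp$-valued and $\D Z=\sum_i E_0 a_i E_0^{-1}\D x_i$. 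Since $\fp$ here is the set of $\begin{pmatrix}0&-\eta^T J\\ \eta&0\end{pmatrix}$, writing $Z=\begin{pmatrix}0&-JY^T_{\phantom{.}}\\ Y&0\end{pmatrix}$ (after matching the reality/shape conventions — a routine bookkeeping check with $J=\I_{n-k,k}$) identifies $Y$; then computing $E_0 a_i E_0^{-1}=\begin{pmatrix}g_1&0\\0&g_2\end{pmatrix}\begin{pmatrix}0&-e_{ii}J\\ e_{ii}&0\end{pmatrix}\begin{pmatrix}g_1^{-1}&0\\0&g_2^{-1}\end{pmatrix}$ gives, in the lower-left block, $-g_1 e_{ii}J g_2^{-1}$ up to the sign in the statement, whence $\D Y=-\sum_i g_1 e_{ii}J g_2^{-1}\,\D x_i$.

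For (c), set $\phi:=Y\vc$ and $(u_1,\dots,u_n)^T:=Jg_2^{-1}\vc$. Differentiating, $\phi_{x_i}=(\partial_{x_i}Y)\vc=-g_1 e_{ii}Jg_2^{-1}\vc=-u_i\,g_1 e_i$, where $e_i$ is the $i$-th standard basis vector, so $\phi_{x_i}=-u_i (g_1)_{\cdot i}$ is proportional to the $i$-th column of $g_1\in\rO(n)$. Since the columns of $g_1$ are orthonormal in the Euclidean inner product, $\langle\phi_{x_i},\phi_{x_j}\rangle=u_iu_j\delta_{ij}$, so $(x_1,\dots,x_n)$ is an orthogonal co-ordinate system with $\phi^*(\D s_0^2)=\sum u_i^2\,\D x_i^2$ wherever $\prod u_i\neq 0$. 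It remains to see the metric is \gk, i.e. $(\vu,\vu)_k=0$: since $g_2\in\rO(n-k,k)$ preserves $(\ ,\ )_k$ and $J\I_{n-k,k}=I$, we get $(\vu,\vu)_k=(Jg_2^{-1}\vc)^T J (Jg_2^{-1}\vc)=\vc^T (g_2^{-1})^T J g_2^{-1}\vc$, and since $(g_2^{-1})^T J g_2^{-1}=J$ this equals $\vc^T J\vc=(\vc,\vc)_k=0$ because $\vc$ is null. That proves (c). For (d), applying (c) to each $\vc_i$ gives \gk\ systems $\phi_i:=Y\vc_i$; the $i$-th partials satisfy $(\phi_i)_{x_j}=-u_{ij}(g_1)_{\cdot j}$ with $\vu_i=Jg_2^{-1}\vc_i$, so $(\phi_i)_{x_j}/u_{ij}=-(g_1)_{\cdot j}$ is independent of $i$ — this is exactly condition \eqref{fa} and condition (i) of Definition~\ref{fk} (consecutive maps are Combescure). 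Linear independence of $\vu_1,\dots,\vu_n$: since $\vu_i=Jg_2^{-1}\vc_i$ and $Jg_2^{-1}$ is invertible, the $\vu_i$ are independent iff the $\vc_i$ are, which holds because $\vC$ is a basis. Hence $Y\vC$ is a \CSGC. Part (e) is immediate from the same formula: for any two null vectors $\vc,\vb$ one has $(Y\vc)_{x_i}=-u_i^{\vc}(g_1)_{\cdot i}$ and $(Y\vb)_{x_i}=-u_i^{\vb}(g_1)_{\cdot i}$, both parallel to the $i$-th column of $g_1$, so $(Y\vc)_{x_i}\parallel(Y\vb)_{x_i}$ for every $i$, which is the definition of a \CT.

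The only genuinely delicate point is part (b): pinning down the precise block form of $Z$ and the sign of $\D Y$ requires being careful about which transpose/$J$-conjugate conventions make $\fp$ as written, and verifying that the $\lambda$-derivative of the $U/K$-reality condition \eqref{bg} forces $Z$ into $\fp$ with real entries of the claimed shape. Everything after that is the linear-algebra of $\rO(n)\times\rO(n-k,k)$ acting on columns, which is routine. I would therefore spend most of the write-up on (b) and on the two invariance identities $(g_1)^T g_1=I$ and $(g_2)^T J g_2=J$ that drive (c)–(e), and treat (d) and (e) as short corollaries of the column formula $\phi_{x_i}=-u_i(g_1)_{\cdot i}$.
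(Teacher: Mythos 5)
Your proposal is correct and follows essentially the same route as the paper: parts (a) and (b) are read off from Proposition \ref{ae} and the block structure of $K$ and $\fp$, part (c) from the column formula $\phi_{x_i}=-u_i\ve_i$ together with $g_2\in\rO(n-k,k)$ preserving $(\ ,\ )_k$, and (d)--(e) from the fact that $(\phi_i)_{x_j}/u_{ij}$ is independent of $i$ and that $Jg_2^{-1}$ is invertible (details the paper compresses into ``Statements (d) and (e) follow''). The only blemish is the passing reference to the \emph{lower-left} block in part (b) --- the map $Y$ sits in the upper-right block of $Z$, with $-JY^T$ in the lower-left --- but your final formula $\D Y=-\sum_i g_1e_{ii}Jg_2^{-1}\D x_i$ is the correct one.
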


\begin{proof}
(a) Since $E_\lambda$ is an extended frame, it satisfies the $U/K$-reality condition \eqref{bg},  $E_0\in K$. Hence we can write $E_0=\begin{pmatrix} g_1& 0\\ 0& g_2\end{pmatrix}$. 

(b) By Proposition \ref{ae}, we have $\D Y=-g_1\delta J g_2^{-1}$. Set $g_1=(\ve_1,\ldots,\ve_n)$ and $\vu=(u_1,\ldots,u_n)^T:= g_2^{-1}\vc$. Then 
\begin{equation}\label{bm}
\D Y{\vc}=-(\ve_1,\ldots,\ve_n)\delta(\epsilon_1u_1,\ldots,\epsilon_nu_n)^T=-\sum_{i=1}^n\epsilon_iu_i\D x_i\ve_i.
\end{equation}

(c) The Euclidean metric is therefore $\D s^2=\sum_{i=1}^n u_i^2 \D x_i^2$. But $\vc$ being a null vector in $\R^{n-k, k}$ implies that $(\vu,\vu)_k=0$. Hence $Y{\vc}$ is a \gk\ orthogonal co-ordinate system. 

Statements (d) and (e) follow.\qed
\end{proof}

\begin{theorem}\label{fc}
Let  $\{\phi_1, \ldots, \phi_n\}$ be a \CSGC\ on $\R^n$, and $\phi_i(0)= 0$. Then there is a solution $F$ of the $\onk$-system and a null basis $\vC=\{\vc_1,\ldots,\vc_n\}$ of $\R^{n-k,k}$ so that $\{\phi_1,\ldots,\phi_n\}$ is the \CSGC\ constructed from $F$ and $\vC$ as in Theorem \ref{ak}.
\end{theorem}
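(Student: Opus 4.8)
The plan is to invert the construction of Theorem \ref{ak} by building the solution $F$ directly out of the given Combescure sequence. First I would fix metric fields $\vu_i = (u_{i1},\ldots,u_{in})^T$ for the $\phi_i$ satisfying the normalization \eqref{fa}, set $\ve_j = (\phi_1)_{x_j}/u_{1j}$ (so by \eqref{fa} this equals $(\phi_i)_{x_j}/u_{ij}$ for all $i$), and let $g_1 = (\ve_1,\ldots,\ve_n)$; since each $\phi_i$ is an orthogonal co-ordinate system, $\{\ve_j\}$ is an orthonormal frame, so $g_1 \in \rO(n)$. Define $F = (f_{ij})$ by $f_{ij} = -\epsilon_i (u_{1i})_{x_j}/u_{1j}$ for $i \neq j$ and $f_{ii}=0$, exactly as in Lemma \ref{da} applied to $\phi_1$. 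The crucial point is that the \gk\ condition and the Combescure relations must force this $F$ to actually solve the $\onk$-system. For this I would introduce $g_2$: stack the metric fields into the matrix $U$ with rows $\vu_i^T$, and look for $g_2 \in \rO(n-k,k)$ and a null basis $\vC = \{\vc_1,\ldots,\vc_n\}$ with $J g_2^{-1}\vc_i = \vu_i$, i.e. $g_2^{-1} \vc_i = J\vu_i$; equivalently the matrix with columns $J\vu_i$ should be $g_2^{-1}$ times the matrix with columns $\vc_i$. The \gk\ condition $(\vu_i,\vu_i)_k = 0$ says each $\vu_i$ is $J$-null, and I would need the mixed Combescure compatibility to give $(\vu_i,\vu_j)_k$ constant, so that after a constant change of basis the Gram matrix of the $J\vu_i$ is a fixed symmetric matrix with zero diagonal; choosing $\vC$ to realize this Gram matrix with null vectors and $g_2$ to be the transition map then works, using that $\vu_1,\ldots,\vu_n$ are independent (Definition \ref{fk}(ii)) to guarantee invertibility.

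Next I would verify that $\begin{pmatrix} g_1 & 0\\ 0 & g_2\end{pmatrix}$ is the value at $\lambda = 0$ of an extended frame for this $F$. By Lemma \ref{da}, $g_1^{-1}\D g_1 = -\delta J F + F^T J \delta = \omega$, which matches the $(1,1)$-block of $\theta_0$ in \eqref{fm}; I must also check $g_2^{-1}\D g_2 = \tau = -\delta F^T J + F\delta J$, which should follow from differentiating the relations $(f_i)_{x_j} = u_{ij}\ve_j$ and the Combescure/Guichard structure — essentially the Codazzi-type equations for the metric fields. Once $E_0 := \begin{pmatrix} g_1 & 0 \\ 0 & g_2 \end{pmatrix}$ is shown to be a flat frame of $\theta_0$, Proposition \ref{cc} gives that $F$ solves the $\onk$-system, and standard ODE integration (flatness of $\theta_\lambda$ for all $\lambda$) produces an extended frame $E_\lambda$ with $E_\lambda(0) = \I$ and this prescribed value at $\lambda=0$ after right-translating by $E_0(0)^{-1}$; I would use $\phi_i(0)=0$ to pin down the integration constant for $Y$.

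Then I would identify $Y$. By Proposition \ref{ae}(3) and Theorem \ref{ak}(b), $Z = \partial_\lambda E \cdot E^{-1}|_{\lambda=0} = \begin{pmatrix} 0 & Y \\ -JY^T & 0\end{pmatrix}$ with $\D Y = -\sum_i g_1 e_{ii} J g_2^{-1}\,\D x_i$. Using \eqref{bm}, $\D(Y\vc_i) = -\sum_j \epsilon_j u_{ij}\,\D x_j\, \ve_j$. On the other hand $\D\phi_i = \sum_j (\phi_i)_{x_j}\D x_j = \sum_j u_{ij}\ve_j\,\D x_j$ — so $Y\vc_i$ and $\phi_i$ differ, and I would need to reconcile the sign $\epsilon_j$. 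This means I should not take the Gram-matrix $\vC$ naively but instead absorb the signs: replace the condition $J g_2^{-1}\vc_i = \vu_i$ by $g_2^{-1}\vc_i = \vu_i$ (i.e. define the metric field of $Y\vc_i$ as $J g_2^{-1}\vc_i = J\vu_i$, whose components are $\epsilon_j u_{ij}$), which is harmless because replacing $u_{ij}$ by $\epsilon_j u_{ij}$ changes neither the metric $\sum u_{ij}^2\,\D x_j^2$ nor the \gk\ condition nor \eqref{fa} (the frame vectors $\ve_j$ are unchanged up to sign, absorbed into $g_1 \in \rO(n)$). With this corrected bookkeeping, $\D(Y\vc_i) = \D\phi_i$ and both vanish at $0$, so $Y\vc_i = \phi_i$, and Theorem \ref{ak}(d) confirms $Y\vC = \{\phi_1,\ldots,\phi_n\}$ is the Combescure sequence.

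The main obstacle I expect is the algebraic step of simultaneously producing $g_2 \in \rO(n-k,k)$ and the null basis $\vC$: one must show the Gram matrix $\big((\vu_i,\vu_j)_k\big)$ is constant in $x$ (this is where the mixed Combescure transform conditions across the whole sequence, not just consecutive pairs, enter), that it is non-degenerate (from linear independence of the $\vu_i$, noting the form $(\,,\,)_k$ is non-degenerate), and that a symmetric matrix with null diagonal arising this way admits a presentation $G^T \I_{n-k,k} G$ with $G$ having $J$-null columns spanning $\R^{n-k,k}$ — essentially a normal-form statement for the $\rO(n-k,k)$-action on null frames. Verifying $g_2^{-1}\D g_2 = \tau$ is the other place real work is needed, but it is the expected Codazzi computation dual to Lemma \ref{da} and should be routine.
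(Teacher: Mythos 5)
Your outline arrives at the same construction as the paper and is correct in substance, but it reorganizes the middle of the argument in a way that hides where the actual content sits, and it has one sign that does not close. The paper's route is: after defining $F$ from the shared frame and Lam\'e coefficients, prove the single identity $\D \vu_\ell^T=\vu_\ell^T\tau$ (equation \eqref{bk}) for every $\ell$. This is exactly the ``routine Codazzi computation'' you defer, and it is the crux: the Guichard condition $\sum_i\epsilon_iu_{\ell i}^2=0$ is needed to pin down the diagonal derivatives $(u_{\ell i})_{x_i}=\epsilon_i\sum_{j\neq i}f_{ji}u_{\ell j}$ (equation \eqref{ab}), which are not determined by the off-diagonal Combescure/Lam\'e relations alone. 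Once \eqref{bk} is in hand, the $n$ linearly independent parallel sections $\vu_1^T,\ldots,\vu_n^T$ force $\tau$ to be flat, so $F$ solves the system by Proposition \ref{cc}; $g_2$ is then produced by integrating $\tau$ with $g_2(0)=\I$, and uniqueness for the linear system $\D X=-J\tau JX$ (equation \eqref{db}) gives $\vu_\ell=-Jg_2^{-1}\vc_\ell$ with $\vc_\ell=-J\vu_\ell(0)$. Your pointwise Gram-matrix construction of $g_2$ and $\vC$ can be made to work, but both of its inputs --- constancy of $\bigl((\vu_i,\vu_j)_k\bigr)$ and the verification $g_2^{-1}\D g_2=\tau$ --- reduce to the same identity \eqref{bk}, so the normal-form machinery buys nothing: one may simply take $\vc_\ell$ proportional to $J\vu_\ell(0)$, which are already $n$ independent null vectors realizing the Gram matrix.

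The one point you should actually repair is the sign in the last step. With $\D Y=-\sum_j g_1e_{jj}Jg_2^{-1}\,\D x_j$, matching $\D(Y\vc_i)$ with $\D\phi_i=\sum_j u_{ij}\ve_j\,\D x_j$ requires $Jg_2^{-1}\vc_i=-\vu_i$, i.e.\ $\vc_i=-J\vu_i(0)$ under the normalization $g_2(0)=\I$; your proposed replacement of $u_{ij}$ by $\epsilon_ju_{ij}$ still leaves $\D(Y\vc_i)=-\sum_j\epsilon_ju_{ij}\ve_j\,\D x_j$, which is not $\D\phi_i$. The fix is harmless --- absorb a minus sign into the definition of $\vc_i$ rather than into the metric field --- but as written the identification $Y\vc_i=\phi_i$ does not go through.
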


\begin{proof}
Let $\vu_\ell=(u_{\ell 1},\ldots,u_{\ell n})^T$ be the metric field for $\phi_\ell^*(\D s_0^2)$ chosen as in Definition \ref{fk}, so $\frac{(\phi_\ell)_{x_j}}{u_{\ell j}}=\frac{(\phi_1)_{x_j}}{u_{1j}}$. Then the vector field $\ve_j:=\frac{(\phi_i)_{x_j}}{u_{ij}}$ is independent of $i$. Set 
\[h=(\ve_1,\ldots,\ve_n),\quad \omega=h^{-1}\D h.\]
Since $h$ does not depend on $\ell$, we have, by Lemma \ref{da}, that 
\[f_{ij}:=-\epsilon_i\frac{(u_{\ell i})_{x_j}}{u_{\ell j}}\]
is independent of $\ell$ for $i\neq j$. Let $F=(f_{ij})$ with $f_{ii}=0$ for all $i$. Then 
\[\omega=-\delta JF + F^TJ\delta= h^{-1}\D h\]
is a flat $\fo(n)$-valued connection 1-form, where $\delta=\diag(\D x_1,\ldots,\D x_n)$.

\smallskip
\noindent{\bf Claim 1.} $F$ is a solution of the $\onk$-system.

The Lax pair $\theta_\lambda$ for the $\onk$-system is \eqref{fm}. By Proposition \ref{cc}, $F$ is a solution of the $\onk$-system if and only if both $\omega$ and $\tau$ are flat, where $\omega=-\delta JF + F^TJ\delta$ and $\tau$ is the $\fo(k, n-k)$-valued 1-form defined by 
\[\tau= (\tau_{ij})= (-\delta F^T + F\delta)J, \quad \text{ i.e., } \tau_{ij}= -f_{ji}\epsilon_j \D x_i + f_{ij} \epsilon_j \D x_j.\] 
To prove Claim 1 it thus suffices to prove that $\tau$ is flat. We will show that
\begin{equation}\label{bk}
\D\vu_\ell^T=\vu_\ell^T \tau, \qquad 1\leq \ell \leq n,
\end{equation}
i.e., $\vu_1^T,\ldots,\vu_n^T$ are linearly independent parallel sections for $\tau$. It then follows that $\tau$ is flat. By the definition of $f_{ij}$, 
\[(u_{\ell i})_{x_j}= -\epsilon_i f_{ij} u_{\ell j}, \quad i\neq j.\]
Since $\sum_{i=1}^n \epsilon_i u_{\ell i}^2=0$, 
\[\epsilon_iu_{\ell i}(u_{\ell i})_{x_i}= -\sum_{j\neq i} \epsilon_j u_{\ell j} (u_{\ell j})_{x_i} = u_{\ell i} \sum_{j\neq i} f_{ji} u_{\ell j}.\]
So we have 
\begin{equation} \label{ab} 
(u_{\ell i})_{x_i} = \epsilon_i\sum_{j\neq i} f_{ji} u_{\ell j}.
\end{equation}
It is a short calculation using \eqref{ab} to establish \eqref{bk}. This proves Claim 1. 

\smallskip

Since $\tau^TJ+ J\tau=0$ and \eqref{bk}, we have  
\begin{equation}\label{db}
\D \vu_\ell = \tau^T \vu_\ell = - J\tau J\vu_\ell, \qquad 1\leq \ell\leq n.
\end{equation}

Let  $\theta_\lambda$ denote the Lax pair for $F$, and $E_\lambda(x)$ the solution of
\[E_\lambda^{-1}\D E_\lambda= \theta_\lambda =
\begin{pmatrix} 
\omega & -\lambda\delta J\\
\lambda\delta & \tau
\end{pmatrix}, \qquad 
E_\lambda(0)=\begin{pmatrix}
h(0) & 0\\
0 & \I
\end{pmatrix},\]
where $h(0)=(\ve_1(0),\ldots,\ve_n(0))\in\rO(n)$. Since $E_\lambda(0)\in\rO(n)\times\rO(n-k, k)$ and $\theta_\lambda$ satisfies the $\onk$-reality condition, $E_\lambda$ satisfies the $\onk$-reality condition. In particular, $E_0(x)\in\rO(n)\times\rO(n-k)$ so we may write 
 $E_0=\begin{pmatrix}
g_1& 0\\
0 & g_2
\end{pmatrix}$. 
Then
\[g_1^{-1}\D g_1=\omega,\qquad g_2^{-1}\D g_2= \tau.\]
But $h^{-1}\D h=\omega$ and $g_1(0)= h(0)$. So $g_1= h$.  
 
\smallskip
\noindent {\bf Claim 2.}  $\vu_\ell = -Jg_2^{-1}J \vu_\ell(0)$. 
 
Since $g_2^{-1}\D g_2= \tau$, 
\[\D (Jg_2^{-1})= -J \tau g_2^{-1}= -J\tau J (Jg_2^{-1}).\] 
By \eqref{db}, $h_2=(\vu_1,\ldots,\vu_n)$ also satisfies $\D h_2=-J\tau J h_2$. There is thus a constant null vector $\vc_\ell$ such that $\vu_\ell= - Jg_2^{-1} \vc_\ell$. Since $g_2(0)= \I$, $\vc_\ell = -J\vu_\ell(0)$. This proves Claim 2. 
 
\smallskip
By Theorem \ref{ak},  
\[\left.\frac{\partial E_\lambda}{\partial\lambda}E_\lambda^{-1}\right|_{\lambda=0}=\begin{pmatrix}0 & Y\\ -J Y^T&0  \end{pmatrix},\]
for some $n\times n$ valued map $Y$. Since $E_\lambda(0)$ is independent of $\lambda$, $Y(0)$ is the zero matrix.  
 
\smallskip
It remains to prove that $Y_\ell= \phi_\ell$, where $Y_\ell= Y\vc_\ell$.

Use $\vu_\ell=-Jg_2^{-1}\vc_\ell$, $g_1= h=(\ve_1,\ldots,\ve_n)$, and Theorem \ref{ak} (b) to see that $\D Y_\ell=\sum_i u_{\ell i}\ve_i \D x_i$. But $\D\phi_\ell = \sum_i u_{\ell i}\ve_\ell \D x_i$ and $Y_\ell(0)= \phi_\ell(0)= 0$. Hence $Y_\ell= \phi_\ell$. 
\qed
\end{proof}

Since an $\fo(n-k,k)$-valued connection 1-form $\tau$ with $n-1$ linearly independent parallel sections is flat, we have:

\begin{corollary}
Suppose $\phi_1,\ldots,\phi_{n-1}$ are \gk\ orthogonal coordinate systems on $\R^n$ such that (i) $\phi_i(x)\mapsto \phi_{i+1}(x)$ is a Combescure transform for $1\leq i\leq n-2$, (ii) $\vu_1,\ldots,\vu_{n-1}$ are linearly independent at every point, where $\vu_\ell=(u_{\ell 1},\ldots, u_{\ell n})$ is the metric field of $\phi_i^*(\D s_0^2)$ chosen such that $\frac{(\phi_\ell)_{x_j}}{u_{\ell j}}=\frac{(\phi_1)_{x_j}}{u_{1j}}$ for all $2\leq \ell\leq n-1$ and $1\leq j\leq n$. There then exists a \gk\ orthogonal co-ordinate system $\phi_n$ on $\R^n$ such that $\{\phi_1,\ldots,\phi_n\}$ is a \CSGC.
\end{corollary}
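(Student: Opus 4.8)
The plan is to mimic the proof of Theorem~\ref{fc}, stopping one step short because we are given only $n-1$ coordinate systems rather than $n$. First I would set $\ve_j := \frac{(\phi_i)_{x_j}}{u_{ij}}$, which by hypothesis (i) and the compatible choice of metric fields is independent of $i$, put $h = (\ve_1,\ldots,\ve_n)$ and $\omega = h^{-1}\D h$, and define $F = (f_{ij})$ by $f_{ij} = -\epsilon_i \frac{(u_{\ell i})_{x_j}}{u_{\ell j}}$ for $i\neq j$ (again $\ell$-independent by Lemma~\ref{da}) and $f_{ii}=0$. As in the proof of Theorem~\ref{fc}, $\omega = -\delta JF + F^T J\delta$ is flat, and the key identities \eqref{ab} hold for each of $\ell = 1,\ldots,n-1$; hence $\vu_1^T,\ldots,\vu_{n-1}^T$ are parallel sections for the $\fo(n-k,k)$-valued form $\tau = (-\delta F^T + F\delta)J$, i.e.\ $\D\vu_\ell^T = \vu_\ell^T\tau$ for $1\leq \ell\leq n-1$.

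The crucial point — and the place the hypotheses are used essentially — is that $\tau$ has $n-1$ linearly independent parallel sections at every point. An $\fo(n-k,k)$-valued connection 1-form on a simply connected domain whose parallel transport preserves an $(n-1)$-dimensional subspace at one point preserves it everywhere, and on the orthogonal complement (a line, with respect to $(\ ,\ )_k$) it is then forced to be trivial as well; so $\tau$ is flat. This is precisely the observation recorded in the sentence preceding the corollary, so I would simply invoke it. Therefore $F$ is a solution of the $\onk$-system by Claim~1 of Theorem~\ref{fc} (whose argument only needs flatness of $\omega$ and $\tau$).

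With $F$ in hand I apply Theorem~\ref{ak}: take the extended frame $E_\lambda$ of $F$ normalized by $E_\lambda(0) = \begin{pmatrix} h(0) & 0\\ 0 & \I\end{pmatrix}$, write $E_0 = \begin{pmatrix} g_1 & 0\\ 0 & g_2\end{pmatrix}$, and as before $g_1 = h$ (matching initial values and connection form). Exactly as in Claim~2 of Theorem~\ref{fc}, $\D(Jg_2^{-1}) = -J\tau J(Jg_2^{-1})$ and $h_2 = (\vu_1,\ldots,\vu_{n-1})$ satisfies the same linear ODE, so $\vu_\ell = -Jg_2^{-1}\vc_\ell$ for constant null vectors $\vc_1,\ldots,\vc_{n-1}$, which are linearly independent since $g_2$ is invertible and the $\vu_\ell$ are. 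Extend $\{\vc_1,\ldots,\vc_{n-1}\}$ to a null basis $\{\vc_1,\ldots,\vc_n\}$ of $\R^{n-k,k}$: this is possible because on a non-degenerate inner product space of signature $(n-k,k)$ with $1\leq k\leq n-1$ every subspace spanned by null vectors can be enlarged by one more null vector, and one can even do this while respecting the existing span (pick any vector outside, shift it by a multiple of an existing null vector to make it null). Then set $\phi_n := Y\vc_n$, where $Y$ is the map from Theorem~\ref{ak}(b), normalized by $Y(0)=0$. By Theorem~\ref{ak}(c)--(d), $Y\vc_n$ is a \gk\ orthogonal co-ordinate system and $\{Y\vc_1,\ldots,Y\vc_n\}$ is a \CSGC; and by the computation $\D(Y\vc_\ell) = \sum_i u_{\ell i}\ve_i\,\D x_i = \D\phi_\ell$ together with $Y\vc_\ell(0) = 0 = \phi_\ell(0)$ we get $Y\vc_\ell = \phi_\ell$ for $1\leq \ell\leq n-1$. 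Hence $\{\phi_1,\ldots,\phi_{n-1},\phi_n\}$ is the desired \CSGC.

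I expect the only genuine subtlety to be the flatness of $\tau$ from $n-1$ parallel sections and the extendability of a null family to a null basis; the first is the stated lemma-sentence before the corollary, and the second is an elementary fact about indefinite inner product spaces (needing $k\geq 1$ and $k\leq n-1$, which is exactly the standing range of $k$). Everything else is a verbatim transcription of the arguments already given in Theorems~\ref{ak} and~\ref{fc}.
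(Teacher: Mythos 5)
Your proof is correct and is exactly the argument the paper intends: rerun the construction of Theorem~\ref{fc} with $n-1$ systems, use the stated fact that an $\fo(n-k,k)$-valued connection with $n-1$ independent parallel sections is flat to conclude $F$ solves the $\onk$-system, and then extend $\{\vc_1,\ldots,\vc_{n-1}\}$ to a null basis and apply Theorem~\ref{ak}(d). The two points you flag as subtleties (flatness of $\tau$, and that the null cone of an indefinite non-degenerate form is not contained in any hyperplane) are indeed the only non-transcription steps, and both are handled correctly.
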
 

As a consequence of Theorem \ref{ak}(e) and Theorem \ref{fc} we have:

\begin{corollary}
Let $\Phi=\{\phi_1,\ldots,\phi_n\}$ be a \CS\ of orthogonal co-ordinate systems on $\R^n$, and $\vec B=\{\vb_1,\ldots,\vb_n\}$ a null basis of $\R^{n-k, k}$. $\vec B$ then defines a \CSGC\ $\Psi=\{\psi_1,\ldots,\psi_n\}$ on $\R^n$ such that $\phi_i(x)\mapsto \psi_i(x)$ is a \CT\ for each $i$.
\end{corollary}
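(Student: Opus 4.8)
The plan is to read this off directly from Theorem~\ref{fc} together with Theorem~\ref{ak}(d)--(e); the only preliminary point is a normalisation of base points. First I would observe that every hypothesis and the conclusion are invariant under translating each $\phi_i$ independently by a constant vector: the Combescure relation $(\phi_i)_{x_j}\parallel(\phi_{i+1})_{x_j}$, the \gk\ condition on $\phi_i^*(\D s_0^2)$, the choice of metric fields $\vu_i$ normalised by \eqref{fa}, and their pointwise linear independence all depend only on the first derivatives of the $\phi_i$. Hence I may assume $\phi_i(0)=0$ for every $i$; at the end one translates each $\phi_i$ back to its original position, which again only changes a constant and so does not affect the asserted Combescure relation $\phi_i\mapsto\psi_i$ (a \CT\ is insensitive to constant translations of either side).

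With this normalisation in force, Theorem~\ref{fc} produces a solution $F$ of the $\onk$-system and a null basis $\vC=\{\vc_1,\ldots,\vc_n\}$ of $\R^{n-k,k}$ such that $\Phi$ is precisely the \CSGC\ built from $F$ and $\vC$ as in Theorem~\ref{ak}. Concretely, choosing the extended frame $E_\lambda$ of $F$ as in the proof of Theorem~\ref{fc} and writing $\frac{\partial E}{\partial\lambda}E^{-1}\big|_{\lambda=0}=\left(\begin{smallmatrix}0&Y\\-JY^T&0\end{smallmatrix}\right)$ for the associated $\calM_{n\times n}$-valued map $Y$, we have $\phi_i=Y\vc_i$ for all $i$.

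Now, keeping the same $F$ and the same extended frame $E_\lambda$, I feed in the given null basis $\vec B=\{\vb_1,\ldots,\vb_n\}$ and set $\psi_i:=Y\vb_i$. By Theorem~\ref{ak}(d), $\Psi=\{\psi_1,\ldots,\psi_n\}=Y\vec B$ is a \CSGC\ on $\R^n$ (defined on the open set where the corresponding metric fields do not vanish). Finally, Theorem~\ref{ak}(e), applied to the two null bases $\vC$ and $\vec B$, gives that $Y(x)\vc_i\mapsto Y(x)\vb_j$ is a \CT\ for all $1\le i,j\le n$; specialising to $j=i$ shows that $\phi_i\mapsto\psi_i$ is a \CT, which is the claim.

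I do not expect any substantial obstacle: the entire content is carried by Theorems~\ref{fc} and~\ref{ak}, and the only thing requiring care is the base-point normalisation needed to invoke Theorem~\ref{fc}, which as noted above costs nothing. (One could alternatively bypass the normalisation by tracking translation constants explicitly through the construction, but the argument above is cleaner.)
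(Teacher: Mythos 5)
Your proof is correct and follows exactly the route the paper intends: the corollary is stated as an immediate consequence of Theorems~\ref{fc} and~\ref{ak}(d)--(e), and your argument simply makes that deduction explicit, with the (harmless but worthwhile) extra care of normalising base points so that the hypothesis $\phi_i(0)=0$ of Theorem~\ref{fc} is met.
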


\section{Isothermic hypersurfaces\texorpdfstring{ in $\R^{n+1}$}{}}\label{au}

In this section we describe the correspondence between solutions of the $\oni$-system and \ik\ hypersurfaces in $\R^{n+1}$.

The following result can be proved in a similar manner to Theorem \ref{ak}:

\begin{theorem}\label{am}
Let $(F,\gamma)$ be a solution of the $\oni$-system, and $E_\lambda$ an extended frame for $(F,\gamma)$. Then:
\begin{enumerate}
\item[(a)] $E_0\in \rO(n+1)\times \rO(n-k, k)$, so $E_0= \begin{pmatrix} g_1&0\\ 0& g_2\end{pmatrix}$.
\item[(b)] $\frac{\partial E}{\partial \lambda} E^{-1}\, \big|_{\lambda=0}$ is of the form $\begin{pmatrix}
0& Y\\
-JY^T&0
\end{pmatrix}$ for some $(n+1)\times n$ valued map $Y$, and                                                                                     $\D Y= -\sum_{i=1}^n g_1 e_{ii} Jg_2^{-1}\D x_i$, where $J=\I_{n-k,k}$. 
\item[(c)] Let $\vb$ be a constant null vector in $\R^{n-k,k}$, then 
$Y{\vb}$ is an immersed \ik\ hypersurface in $\R^{n+1}$ with
\[\I_{\vb}= \sum_{i=1}^n u_i^2 \D x_i^2, \quad \II_{\vb}= \sum_{i=1}^n \epsilon_i \gamma_iu_i \D x_i^2,\]
where $(u_1, \ldots, u_n)^T= Jg_2^{-1}\vb$.
\item[(d)] If $\vC=\{\vc_1, \ldots, \vc_n\}$ is a null basis of $\R^{n-k,k}$, then $Y\vC= \{Y\vc_1,\ldots,Y\vc_n\}$ is a \CSI\ in $\R^{n+1}$.
\item[(e)] If $\{\vb_1,\ldots,\vb_n\}$ is another null basis of $\R^{n-k,k}$, then $Y\vc_i(x)\mapsto Y\vb_i(x)$ is a \CT\ for $1\leq i\leq n$. 
\end{enumerate}
\end{theorem}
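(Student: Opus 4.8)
The plan is to follow the proof of Theorem~\ref{ak} line by line; the only genuinely new point is the computation of the second fundamental form in part~(c).

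Parts (a) and (b) are formal. For~(a): an extended frame satisfies the reality condition~\eqref{bg}, so $E_0\in K$ by Proposition~\ref{ae}(1), and since the involution of the $\oni$-system is $\sigma(g)=\I_{n+1,n}g\I_{n+1,n}^{-1}$ (Example~\ref{ad}) its fixed set is $K=\rO(n+1)\times\rO(n-k,k)$, giving the block form $E_0=\begin{pmatrix}g_1&0\\0&g_2\end{pmatrix}$. For~(b): by Proposition~\ref{ae}(3) the map $Z:=\frac{\partial E}{\partial\lambda}E^{-1}\,\big|_{\lambda=0}$ is $\fp$-valued with $\D Z=\sum_i E_0 a_i E_0^{-1}\,\D x_i$. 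Inserting the explicit $\fp$ and $a_i=\begin{pmatrix}0&-A_i^TJ\\A_i&0\end{pmatrix}$, $A_i=(e_{ii},0)$, of Example~\ref{ad} forces $Z=\begin{pmatrix}0&Y\\-JY^T&0\end{pmatrix}$ for an $(n+1)\times n$ map $Y$, and comparing the off-diagonal blocks of $E_0 a_i E_0^{-1}$ yields $\D Y=-\sum_i g_1\begin{pmatrix}e_{ii}\\0\end{pmatrix}Jg_2^{-1}\,\D x_i$ (in the statement $e_{ii}$ abbreviates the $(n+1)\times n$ matrix $A_i^T$).

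For~(c) I would write $g_1=(\ve_1,\ldots,\ve_{n+1})$ for the orthonormal columns of $g_1\in\rO(n+1)$ and set $\vu=(u_1,\ldots,u_n)^T=Jg_2^{-1}\vb$. Feeding the formula of~(b) to $\vb$ collapses $\begin{pmatrix}e_{ii}\\0\end{pmatrix}\vu$ to $u_i$ times the $i$-th standard basis vector of $\R^{n+1}$, hence $\D(Y\vb)=-\sum_{i=1}^n u_i\ve_i\,\D x_i$; orthonormality of the $\ve_i$ then gives $\I_\vb=\sum_i u_i^2\,\D x_i^2$, a diagonal metric which is nondegenerate precisely where $\prod_i u_i\neq 0$, so $Y\vb$ is an immersion there. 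That this metric is \gk\ amounts to $(\vu,\vu)_k=\vu^TJ\vu=0$, and a short computation using $J^2=I$ together with $(g_2^{-1})^TJg_2^{-1}=J$ (which holds as $g_2\in\rO(n-k,k)$) reduces this to $(\vb,\vb)_k=0$, true since $\vb$ is null. The one step needing real care is the second fundamental form: since $\D(Y\vb)$ spans $\R\ve_1+\cdots+\R\ve_n$ on the open set above, the unit normal is $N=\pm\ve_{n+1}$, and differentiating the columns of $g_1$ via $\D g_1=g_1\omega$ with $\omega$ as in~\eqref{ap} — whose last column is the block $\begin{pmatrix}-\delta J\gamma\\0\end{pmatrix}$ — gives $\D\ve_{n+1}=-\sum_{l=1}^n\epsilon_l\gamma_l\ve_l\,\D x_l$, so $\II_\vb$ has entries $-\langle\partial_{x_i}(Y\vb),\partial_{x_j}N\rangle=\epsilon_i\gamma_iu_i\,\delta_{ij}$ once $N$ is oriented correctly, i.e.\ $\II_\vb=\sum_i\epsilon_i\gamma_iu_i\,\D x_i^2$. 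Both fundamental forms being diagonal in $x$ means $x$ is a line of curvature co-ordinate system, so with the \gk\ property $Y\vb$ is \ik. The main obstacle is thus purely organisational — tracking the block decompositions of $\omega$ and $a_i$ and keeping orientation signs consistent — with no new idea beyond the proof of Theorem~\ref{ak}.

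For~(d) and~(e): the frame vectors $\ve_j$ depend only on the solution $(F,\gamma)$, not on the chosen null vector, so for any two null vectors $\vc,\vb\in\R^{n-k,k}$ one has $(Y\vc)_{x_j}\parallel\ve_j\parallel(Y\vb)_{x_j}$, which is exactly the condition that $Y\vc\mapsto Y\vb$ be a \CT. If $\vC=\{\vc_1,\ldots,\vc_n\}$ is a null basis, the metric fields $\vu_i=Jg_2^{-1}\vc_i$ are linearly independent at every point because $Jg_2^{-1}$ is invertible, while the normalisation of Definition~\ref{fb} holds since $(Y\vc_i)_{x_j}/u_{ij}=-\ve_j$ is independent of $i$; hence $Y\vC$ is a \CSI, which is~(d), and the same parallelism applied to a second null basis $\{\vb_i\}$ gives~(e).
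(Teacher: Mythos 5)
Your proposal is correct and follows exactly the route the paper intends: the paper gives no separate proof of Theorem~\ref{am}, stating only that it "can be proved in a similar manner to Theorem~\ref{ak}", and your argument is precisely that adaptation, with the one genuinely new ingredient (reading off $\II_{\vb}$ from the last column $\begin{pmatrix}-\delta J\gamma\\0\end{pmatrix}$ of $\omega$ via $\D g_1=g_1\omega$) carried out correctly. The block computations in (a)--(b), the Guichard condition via $g_2^{-T}Jg_2^{-1}=J$, and the parallelism argument for (d)--(e) all check out.
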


We need the following Lemma to prove an analogue of Theorem \ref{fc} for \CSIs\ in $\R^{n+1}$: 

\begin{lemma}\label{fg}
Suppose that $f(x_1, \ldots, x_n)$ is an immersed \ik\ hypersurface in $\R^{n+1}$ with fundamental forms 
\[\I=\sum_{i=1}^n u_i^2\D x_i^2, \qquad \II= \sum_{i=1}^n u_i h_i \D x_i^2,\]
where $(\vu,\vu)_k=0$ and $\vu=(u_1,\ldots,u_n)$. Set $F=(f_{ij})$ and $\gamma=(\gamma_1, \ldots, \gamma_n)^T$ by
\begin{equation} \label{af}
f_{ij}=\begin{cases}
-\epsilon_i \frac{(u_i)_{x_j}}{u_j},& i\neq j,\\
0,& i=j,
\end{cases}\qquad
\gamma=(\gamma_1,\ldots,\gamma_n)^T,\quad\gamma_i=\epsilon_i h_i,
\end{equation}
where $J=\I_{n-k,k}=\diag(\epsilon_1,\ldots,\epsilon_n)$. Then 
\begin{enumerate}
\item[(a)]
\begin{equation}\label{dd}
\omega= \begin{pmatrix}
-\delta J F+ F^T J\delta & - \delta J \gamma\\
\gamma^T J\delta &0\end{pmatrix}, 
\end{equation} is flat,
\item[(b)] $\D \vu^T= \vu^T\tau$, where 
\begin{equation}\label{de}
\tau = -\delta F^T J+ F \delta J.
\end{equation}
\end{enumerate}
\end{lemma}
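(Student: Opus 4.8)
The plan is to verify directly that the $1$-forms $\omega$ and $\tau$ defined by \eqref{dd} and \eqref{de} are flat, using the Gauss--Codazzi equations for the hypersurface $f$ together with the \gk\ condition $(\vu,\vu)_k=0$. First I would record the structure equations. Writing $\ve_i=\frac{f_{x_i}}{u_i}$ and $\ve_{n+1}$ for the unit normal, the frame $g=(\ve_1,\ldots,\ve_n,\ve_{n+1})$ into $\rO(n+1)$ satisfies $g^{-1}\D g$ equal to a block $\fo(n+1)$-valued $1$-form; by Lemma \ref{da} applied to the line-of-curvature parameterisation (ignoring the normal direction for a moment), its upper-left $n\times n$ block is exactly $-\delta JF+F^TJ\delta$ with $f_{ij}$ as in \eqref{af}, and its last row/column is governed by the shape operator, which is diagonal with entries $h_i/u_i$ in these coordinates. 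A short computation identifies this last row/column with $\pm\delta J\gamma$ once $\gamma_i=\epsilon_i h_i$ is substituted and the factors of $u_i$ are absorbed into $\delta$. Hence $\omega = g^{-1}\D g$ is literally the Maurer--Cartan form of a map into $\rO(n+1)$, so it is flat; this proves (a), modulo checking the signs and the placement of the $\epsilon_i$'s in the off-diagonal and normal blocks, which is the one genuinely fiddly bookkeeping step.

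For part (b), the claim $\D\vu^T=\vu^T\tau$ with $\tau=-\delta F^TJ+F\delta J$ unwinds, entry by entry, to two families of identities. Off the diagonal, $(u_i)_{x_j}=-\epsilon_i f_{ij}u_j$ for $i\neq j$, which is just the definition \eqref{af} rearranged. On the diagonal we need $(u_i)_{x_i}=\epsilon_i\sum_{j\neq i}f_{ji}u_j$; this is exactly the computation \eqref{ab} from the proof of Theorem \ref{fc}, obtained by differentiating the constraint $\sum_i\epsilon_i u_i^2=0$ with respect to $x_i$ and substituting the off-diagonal relations. So (b) follows immediately from the \gk\ condition and \eqref{af} without invoking the Codazzi equation at all.

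The main obstacle — really the only nontrivial point — is that the statement only asserts $\omega$ is flat and $\D\vu^T=\vu^T\tau$, but does \emph{not} yet assert that $\tau$ is flat; that will presumably be deduced later (as in Theorem \ref{fc}, via "an $\fo(n-k,k)$-valued $1$-form with enough independent parallel sections is flat") once one has a full Combescure sequence. So for the present Lemma I would \emph{not} try to prove flatness of $\tau$ in general: with a single hypersurface one only has the one parallel section $\vu^T$, which is not enough. I would instead make sure the proof of (a) is self-contained — i.e., that flatness of $\omega$ genuinely follows from Gauss--Codazzi for $f$ rather than from any sequence hypothesis. Concretely: the upper-left block being closed-under-$\D$ plus the mixed blocks is equivalent to the Gauss equation and the Codazzi equation $(h_i)_{x_j}=\text{(something)}\cdot h_j$ for $i\neq j$, and the latter is what forces $(\gamma_i)_{x_j}+\epsilon_j f_{ij}\gamma_j=0$, matching the last line of \eqref{aj}; tracing this correspondence carefully is where the care is needed, but it is a direct verification once the frame equations are written out.

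Finally I would remark that, combined with Theorem \ref{am}, Lemma \ref{fg} supplies exactly the input needed to run the analogue of Theorem \ref{fc}: given a \CSI, the common vector fields $\ve_j$ produce an $F$ independent of the index $\ell$, Lemma \ref{fg}(a) gives flatness of $\omega$, Lemma \ref{fg}(b) together with linear independence of the $\vu_\ell$ gives flatness of $\tau$, and hence $(F,\gamma)$ solves the $\oni$-system.
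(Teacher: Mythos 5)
Your proposal is correct and follows essentially the same route as the paper: part (a) is the observation that $\omega$ is the Maurer--Cartan form of the adapted orthonormal frame $(\ve_1,\ldots,\ve_{n+1})$ (equivalently, the Gauss--Codazzi equations of the immersed hypersurface), rewritten in terms of $F$ and $\gamma$, and part (b) is the entrywise check combining the definition \eqref{af} with the differentiated constraint $\sum_i\epsilon_iu_i^2=0$, exactly as in \eqref{ab}. Your remark that the Lemma deliberately does not assert flatness of $\tau$ -- which is only obtained later from the $n$ independent parallel sections of a Combescure sequence -- matches the paper's intent precisely.
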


\begin{proof}
Set 
\[\ve_i= \frac{f_{x_i}}{u_i},\quad \omega_i= u_i \D x_i,  \quad 1\leq i\leq n,\]
and $\ve_{n+1}$ the unit normal field. Then $\{\omega_1,\ldots,\omega_n\}$ is the dual frame and
\[\D f= \sum_{i=1}^n \omega_i \ve_i =\sum_{i=1}^n u_i \ve_i \D x_i,\]
Write 
$\D \ve_i =\sum_{j=1}^{n+1} \omega_{ji} \ve_j$ for $ 1\leq i\leq n+1$.
Then 
\begin{equation}\label{fh}
\omega_{ij}= (\D \ve_j, \ve_i)_o, \qquad 1\leq i, j\leq n+1.
\end{equation}
It follows from elementary hypersurface theory that $(\omega_{ij})_{1\leq i,j\leq n}$ is the Levi-Civita connection 1-form for the induced metric $\I_f$ and
\begin{equation}\label{fj}
\omega_{ij}= \frac{(u_i)_{x_j}}{u_j} \D x_i - \frac{(u_j)_{x_i}}{u_i} \D x_j, \quad \omega_{n+1, i} = - h_i \D x_i, 
\quad 1\leq i, j\leq n.
\end{equation}
The Gauss--Codazzi equation for \ik\ hypersurfaces in $\R^{n+1}$ is given by the flatness of the $\fo(n+1)$-valued 1-form $\omega=(\omega_{ij})_{1\leq i, j\leq n+1}$, which written in terms of $F,\gamma$ is \eqref{dd}. This proves statement (a).   
 
Use the condition $\sum_{i=1}^n\epsilon_iu_i^2=0$ and the same computation as for the \gk\ orthogonal co-ordinate system case to conclude $\D \vu^T= \vu^T\tau$. 
\qed
\end{proof}

We have an analogue of Theorem \ref{fc} for \CSIs\ in $\R^{n+1}$:

\begin{theorem}\label{fd}
Let $\{f_1,\ldots,f_n\}$ be a \CS\ of \ik\ hypersurfaces in $\R^{n+1}$, and $f_i(0)= 0$. Then there is a solution $(F,\gamma)$ of the $\oni$-system and a null basis $\vC=\{\vc_1,\ldots,\vc_n\}$ so that $\{\phi_1,\ldots,\phi_n\}$ is the \CSI\ of $\R^{n+1}$ constructed from $(F,\gamma)$ and $\vC$ as in Theorem \ref{am}.
\end{theorem}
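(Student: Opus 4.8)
The plan is to mirror the proof of Theorem \ref{fc}, using Lemma \ref{fg} in place of Lemma \ref{da} to handle the second fundamental form. First I would fix the metric fields $\vu_\ell = (u_{\ell 1},\ldots,u_{\ell n})^T$ for $\I_{f_\ell}$ as in Definition \ref{fb}, so that $\ve_j := (f_i)_{x_j}/u_{ij}$ is independent of $i$. Writing $h = (\ve_1,\ldots,\ve_n,\ve_{n+1})$, where $\ve_{n+1}$ is a choice of unit normal (and here I would need to check that $\ve_{n+1}$ is itself independent of $i$, which follows from the Combescure relation $\nu_{f_{i+1}}\parallel\nu_{f_i}$ forced by parallelism of all the $(f_i)_{x_j}$), the Levi-Civita-type connection form $\omega = h^{-1}\D h$ does not depend on $\ell$. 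Since $f_{ij} = -\epsilon_i (u_{\ell i})_{x_j}/u_{\ell j}$ and $\gamma_i = \epsilon_i h_{\ell i}$ (the normal-curvature coefficient) are the matrix entries read off from $\omega$ via \eqref{fj}, I would need to argue that $\gamma$ is independent of $\ell$ as well; this again comes from the fact that $\omega_{n+1,i}$ is an entry of the $\ell$-independent matrix $\omega$.

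The next step is to show $(F,\gamma)$ solves the $\oni$-system. By Proposition \ref{cc} and \eqref{cf}, this is equivalent to flatness of $\omega$ and of $\tau = (F\delta - \delta F^T)J$. Flatness of $\omega$ is exactly Lemma \ref{fg}(a) applied to any one $f_\ell$. For $\tau$, I would show that $\vu_1^T,\ldots,\vu_n^T$ are linearly independent parallel sections, i.e. $\D\vu_\ell^T = \vu_\ell^T\tau$ for each $\ell$; this is Lemma \ref{fg}(b), and the linear independence is hypothesis (ii) of Definition \ref{fb}. An $\fo(n-k,k)$-valued 1-form with $n$ independent parallel sections is flat, so $\tau$ is flat and Claim~1 is done. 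Equivalently, using $\tau^T J + J\tau = 0$, one gets $\D\vu_\ell = -J\tau J \vu_\ell$.

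I would then let $\theta_\lambda$ be the Lax pair \eqref{cf} for $(F,\gamma)$ and let $E_\lambda$ solve $E_\lambda^{-1}\D E_\lambda = \theta_\lambda$ with $E_\lambda(0) = \diag(h(0),\I)$, where $h(0)\in\rO(n+1)$. The reality condition gives $E_0 = \diag(g_1,g_2)\in\rO(n+1)\times\rO(n-k,k)$, and comparing $g_1^{-1}\D g_1 = \omega$ with $h^{-1}\D h = \omega$ and matching at $x=0$ yields $g_1 = h$; similarly $g_2^{-1}\D g_2 = \tau$. Since $\D(Jg_2^{-1}) = -J\tau J(Jg_2^{-1})$ and $h_2 := (\vu_1,\ldots,\vu_n)$ satisfies the same equation, there are constant null vectors $\vc_\ell$ with $\vu_\ell = -Jg_2^{-1}\vc_\ell$; nullity of $\vc_\ell$ follows from $(\vu_\ell,\vu_\ell)_k = 0$, and $\{\vc_1,\ldots,\vc_n\}$ is a basis because the $\vu_\ell$ are independent. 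Finally, by Theorem \ref{am}(b), $Y_\ell := Y\vc_\ell$ satisfies $\D Y_\ell = \sum_i u_{\ell i}\ve_i\,\D x_i = \D f_\ell$, and $Y_\ell(0) = 0 = f_\ell(0)$, so $Y_\ell = f_\ell$, exhibiting $\{f_1,\ldots,f_n\}$ as the Combescure sequence built from $(F,\gamma)$ and $\vC$.

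The main obstacle I anticipate is the verification that the normal field $\ve_{n+1}$ and the coefficients $\gamma_i$ are genuinely independent of the index $i$ — i.e. that a Combescure transform of isothermic hypersurfaces not only keeps the tangent directions $(f_i)_{x_j}$ parallel but is compatible with a single common frame $h = (\ve_1,\ldots,\ve_{n+1})$ on which the full $\fo(n+1)$-connection $\omega$ is read off. Once this compatibility is established, everything else is the routine adaptation of Theorem \ref{fc}, with Lemma \ref{fg} supplying the two structural identities (flatness of $\omega$ and $\D\vu^T = \vu^T\tau$) that there were supplied by Lemma \ref{da}. A minor point to be careful about is that the open set where $\prod_i u_{\ell i} \neq 0$ may vary with $\ell$, so one should either intersect over $\ell$ or note that the construction is local and the hypotheses already presuppose immersed hypersurfaces, so each $u_{\ell i}$ is nowhere zero.
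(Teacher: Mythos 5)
Your proposal is correct and follows essentially the same route as the paper: establish that the common tangent frame forces a common unit normal and hence an $\ell$-independent $\fo(n+1)$-form $\omega$ (so $F$ and $\gamma$ are well defined), get flatness of $\omega$ from Lemma \ref{fg}(a) and flatness of $\tau$ from the $n$ independent parallel sections $\vu_\ell^T$ of Lemma \ref{fg}(b), then repeat the frame-integration argument of Theorem \ref{fc} to recover $f_\ell=Y\vc_\ell$. The point you flag as the main obstacle (the $\ell$-independence of $\ve_{n+1}$ and of $\gamma$) is resolved exactly as you suggest, by reading both off the common frame via \eqref{fh} and \eqref{fj}.
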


\begin{proof}
Let $\vu_\ell=(u_{\ell 1},\ldots,u_{\ell n})$ be the metric fields for $\I_{f_\ell}$ (i.e., $\I_{f_\ell}=\sum_{i=1}^n u_{\ell i}^2\D x_i^2$) such that 
\[\frac{(f_\ell)_{x_i}}{u_{\ell i}}=\frac{(f_1)_{x_i}}{u_{1i}}\]
for all $1\leq\ell\leq n$. We denote these vector fields by $\ve_1,\ldots,\ve_n$ (independent of $\ell$). This means that $(\ve_1,\ldots,\ve_n)$ is the orthonormal principal curvature frame for each $f_\ell$. So the unit normal $\ve_{n+1}$ for $f_1$ is also the unit normal for each $f_\ell$.  By \eqref{fh} the flat $\fo(n+1)$ connection 1-form $(\omega_{ij}(\ell))$ 
\[(\omega_{ij})(\ell)=(\D \ve_j, \ve_i)_0,\]
is independent of $\ell$, denoted it by $\omega=(\omega_{ij})$. But $\omega_{ij}$ is also given by \eqref{fj}. So $\frac{(u_{\ell i})_{x_j}}{u_{\ell j}}$ is independent of $\ell$. Let $F_\ell$ and $\gamma_\ell$ denote the matrix maps defined for $f_\ell$ as in \eqref{af}. So we have proved that $F_\ell=F$ and $\gamma_\ell=\gamma$ are independent of $\ell$, and $\omega$ is given by \eqref{dd}. Let $\tau$ be the $\fo(n-k, k)$ connection 1-form defined by \eqref{de}. By Lemma \ref{fg} (b), $\D \vu_\ell^T= \vu_\ell^T \tau$ for $1\leq \ell\leq n$, i.e., $\vu^T_\ell$ is a parallel frame for $\tau$. By Definition \ref{fb} of a \CSI, $\vu_1,\ldots,\vu_n$ are linearly independent. This proves that $\tau$ is flat. In Example \ref{ad}, we see that the Lax pair $\theta_\lambda$ for $(F,\gamma)$ at $\lambda=0$ is $\begin{pmatrix} \omega&0\\0&\tau\end{pmatrix}$. By Proposition \ref{cc} (3), $(F,\gamma)$ is a solution of the $\oni$-system. 

The rest of the proof can be carried out the same way as that of Theorem \ref{fc}. 
\qed
\end{proof}

Since an $\fo(n-k, k)$-connection $\tau$ with $n-1$ linearly independent parallel sections is flat, we have

\begin{corollary}
Suppose $f_1,\ldots,f_{n-1}$ are \ik\ of $\R^{n+1}$ such that (i) $f_i(x)\mapsto f_{i+1}(x)$ is Combescure for each $1\leq i\leq n-2$, (ii) $\vu_1,\ldots,\vu_{n-1}$ are linearly independent at every point, where $\vu_\ell=(u_{\ell 1},\ldots,u_{\ell n})$ is the metric field for the induced metric $\I_{f_\ell}$ chosen in such a way that $\frac{(f_\ell)_{x_j}}{u_{\ell j}}=\frac{(f_1)_{x_j}}{u_{1j}}$ for all $2\leq\ell\leq n-1$ and $1\leq j\leq n$. There then exists an \ik\ hypersurface $f_n$ in $\R^{n+1}$ such that $\{f_1,\ldots,f_n\}$ is a \CSI.  
\end{corollary}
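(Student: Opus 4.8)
The plan is to run the argument of Theorem~\ref{fd} with $\{f_1,\ldots,f_{n-1}\}$ in place of a full Combescure sequence; the only new ingredients are deducing flatness of the relevant $\fo(n-k,k)$-connection from just $n-1$ parallel sections, and completing the resulting null data to a null basis of $\R^{n-k,k}$.

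First I would extract the common data exactly as in the proof of Theorem~\ref{fd}: by hypothesis~(i) the orthonormal principal curvature frame $\ve_j=(f_\ell)_{x_j}/u_{\ell j}$ and the unit normal $\ve_{n+1}$ do not depend on $\ell$, so by \eqref{fh} and \eqref{fj} the maps $F=(f_{ij})$ and $\gamma=(\gamma_i)$ built from any $f_\ell$ via \eqref{af} coincide, the $\fo(n+1)$-valued form $\omega$ of \eqref{dd} is flat by Lemma~\ref{fg}(a), and with $\tau$ as in \eqref{de} Lemma~\ref{fg}(b) gives $\D\vu_\ell^T=\vu_\ell^T\tau$ for $1\le\ell\le n-1$. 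Thus $\vu_1,\ldots,\vu_{n-1}$ are $n-1$ pointwise linearly independent parallel sections of $\tau$; its curvature $R$ then annihilates a codimension-$1$ subbundle, and since $R$ is $\fo(n-k,k)$-valued — hence $\I_{n-k,k}$ times a genuinely skew-symmetric matrix, so of even rank — it has rank $0$, i.e. $\tau$ is flat. Then the $\lambda=0$ value of the Lax pair \eqref{cf} of $(F,\gamma)$ is the flat form $\begin{pmatrix}\omega&0\\0&\tau\end{pmatrix}$, so $(F,\gamma)$ solves the $\oni$-system by Proposition~\ref{cc}(3).

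Now I would apply Theorem~\ref{am} with the extended frame $E_\lambda$ normalized by $E_\lambda(0)=\begin{pmatrix}(\ve_1(0),\ldots,\ve_{n+1}(0))&0\\0&\I\end{pmatrix}$, which lies in $\rO(n+1)\times\rO(n-k,k)$ so that $E_\lambda$ satisfies the reality condition. Arguing as in the proof of Theorem~\ref{fc} — identifying $g_1=(\ve_1,\ldots,\ve_{n+1})$ from $g_1^{-1}\D g_1=\omega$ and from the initial value, and matching $\D(Jg_2^{-1})=-J\tau J(Jg_2^{-1})$ against $\D\vu_\ell=-J\tau J\vu_\ell$ — one obtains constant null vectors $\vc_1,\ldots,\vc_{n-1}$, determined by $\vu_\ell(0)$, for which $\D(Y\vc_\ell)=\sum_i u_{\ell i}\ve_i\,\D x_i=\D f_\ell$ and hence, since $Y(0)=0=f_\ell(0)$, $Y\vc_\ell=f_\ell$ for $1\le\ell\le n-1$. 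Finally, the $\vc_\ell$ are linearly independent (as the $\vu_\ell$ are) and span a hyperplane $H$; since $1\le k\le n-1$ the null cone of $(\ ,\ )_k$ linearly spans $\R^{n-k,k}$, so it is not contained in $H$ and I may choose a null vector $\vc_n\notin H$. Then $\vC=\{\vc_1,\ldots,\vc_n\}$ is a null basis, and setting $f_n:=Y\vc_n$, Theorem~\ref{am}(c),(d) shows $\{f_1,\ldots,f_{n-1},f_n\}$ is a \CSI\ in $\R^{n+1}$. There is no serious obstacle here beyond bookkeeping: the one point requiring care is checking that the metric fields and the frame normalization fixed in the statement are exactly those delivered by Theorem~\ref{am}, so that the reconstruction returns $f_\ell$ itself rather than a Combescure partner of it; the flatness of $\tau$ and the null-basis extension are each a couple of lines.
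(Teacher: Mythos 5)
Your proof is correct and follows the route the paper intends: the paper itself only records the key observation (an $\fo(n-k,k)$-connection with $n-1$ independent parallel sections is flat, by the even-rank argument you spell out) and leaves the rest to the machinery of Lemma~\ref{fg}, Theorem~\ref{am} and the reconstruction in Theorem~\ref{fd}/\ref{fc}, which you reproduce faithfully, including the correct extension of $\{\vc_1,\ldots,\vc_{n-1}\}$ to a null basis using that the null cone spans $\R^{n-k,k}$. The only detail you gloss over is the harmless normalization $f_\ell(0)=0$, which is achieved by translation and does not affect the Combescure conditions.
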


\smallskip
\noindent{\bf Classical Christoffel transforms}\nopagebreak
\smallskip

When $n=2$, we may assume that $\vc=(1,1)^T$, $\vb=(1,-1)^T$, $f=Y\vc$, $g_1=(\ve_1,\ve_2,\ve_3)$, $\I_f= e^{2u}(\D x_1^2+ \D x_2^2)$, and $g_2^{-1}= \begin{pmatrix} \cosh u& \sinh u\\ \sinh u & \cosh u\end{pmatrix}$. Then $f(x)=Y(x)\vc\mapsto \tilde f(x)= Y(x)\vb$ given in Theorem \ref{am} is a classical Christoffel transform of isothermic surfaces in $\R^3$.  Since $g_2^{-1}\vc= e^u(1,1)^T$ and $g_2^{-1}\vb= e^{-u}(1,-1)^T$, we have $\D f= e^u (\ve_1\D x_1+ \ve_2 \D x_2)$, $\D\tilde f= e^{-u}(\ve_1\D x_1 - \ve_2\D x_2)$, $\ve_1,\ve_2$ are principal directions, and $f\mapsto \tilde f$ is orientation reversing. 

\medskip
\noindent {\bf Isothermic$_k$ submanifolds}\nopagebreak
\smallskip

An $n$-dimensional submanifold in $\R^{n+m}$ is \ik\ if it has flat normal bundle and line of curvature co-ordinates so that the induced metric is \gk. Theorems \ref{am} and \ref{fd} for \ik\ hypersurfaces can be generalised easily to \ik\ submanifolds (with essentially the same statements except replacing $\oni$ by $\frac{\rO(2n+m-k, k)}{\rO(n+m)\times \rO(n-k, k)}$). In particular, 2-submanifolds of $\R^{2+m}$ are isothermic$_1$ if and only if they are isothermic in the sense of \cite{Bruck2002} and \cite{Burstall2004}.

\section{Loop Groups, Ribaucour and Lie transformations}

Classically, geometric transforms for isothermic surfaces were constructed using differential geometric techniques. The Gauss--Codazzi equations for surfaces in $\R^3$ admitting a huge class of geometric transforms often turn out to be soliton equations, and their geometric transforms can be constructed from the dressing action on the space of solutions. Although the classical geometric constructions are beautiful, they often seem mysterious. But if the Gauss--Codazzi equation is a soliton equation, then the techniques from soliton theory give a unified method to construct these geometric transforms. In this section, we first review the dressing action of a loop group on the space of solutions of the $U/K$-system, then use the dressing action of a rational loop with two poles and the scaling transformation of solutions of the $U/K$-system to, respectively, construct geometric Ribaucour and Lie transforms for \CSs\ \ik\ hypersurfaces in $\R^{n+1}$. We also show that the conjugation of a Ribaucour transform by a Lie transform is Ribaucour and that Christoffel transforms commute with Ribaucour transforms.   

\smallskip\noindent{\bf The Dressing action}\nopagebreak
\smallskip

Fix $\epsilon>0$. Let $L(U^\C)$ denote the group of holomorphic maps from $\epsilon^{-1} < |\lambda | < \infty$ to $U_{\C}$ satisfying the $U/K$-reality condition \eqref{bg}, $L_+(U^\C)$ the subgroup of $g\in L(U^\C)$ that can be extended holomorphically to $\C$, and $L_-(U^\C)$ the subgroup of $g\in L(U^\C)$ that can be extended holomorphically to $\lambda=\infty$ with $g(\infty)=\I$. The dressing action of $L_-(U^\C)$ on $L_+(U^\C)$ is defined as follows: Given $g_\pm \in L_\pm(U^\C)$, use the Birkhoff Factorization Theorem to see that for generic $g_\pm$ we can factor $g_-g_+ = \hat g_+ \hat g_-$ uniquely with $\hat g_\pm\in L_\pm(U^\C)$, then the dressing action of $L_-(U^\C)$ on $L_+(U^\C)$ is defined by $g_-\ast g_+ = \hat g_+$.  

It is proved in \cite{Terng2000} that the $L_-(U^\C)$ action induces an action on the space of solutions of the $U/K$-system and on the space of normalized extended frames: If $E$ is the normalized frame of a solution $v$ of the $U/K$-system, then $E(x)\in L_+(U^\C)$. Given $g_-\in L_-(U^\C)$, let $\hat E(x)= g_-\ast E(x)$. Recall that $\hat E(x)$ is obtained by factoring $g_-E(x)=\hat E(x) \tilde g_-(x)$ with $\hat E(x)\in L_+(U^\C)$ and $\tilde g_-(x)\in L_-(U^\C)$. Then $\hat E$ is the normalized frame for a new solution $\tilde v= v+ \pi(m_1)$ of the $U/K$-system, where $m_1$ is the coefficient of $\lambda^{-1}$ for $\tilde g_-$, and $\pi(\xi)$ is the projection from $\fp$ to $\fa^\perp\cap \fp$.  Note that Theorem \ref{av} is a special case of this general Theorem when we take $g_-= p_{\alpha, L}$ defined by \eqref{bd}. In this case, the factorization is carried out explicitly.  

\smallskip\noindent{\bf The Cauchy problem}\nopagebreak
\smallskip

The $U/K$-system as an exterior differential system is involutive and, by Cartan--K\"ahler Theory, local analytic solutions are determined by a germ of an $\fa^\perp\cap\fp$-valued analytic function on a regular line (cf. \cite{Terng2005}). Recall that $a\in \fa$ is regular if $\ad(a):\fa^\perp\cap \fp\to \fk$ is injective. Solutions in the orbit of $L_-(U^\C)$ at the vacuum solution are local analytic.

We can also apply the theory of inverse scattering to find global solutions to the Cauchy problem (cf. \cite{Terng1997}). Explicitly: fix a regular $b\in\fa$, and let $\xi_0:\R\to\fa^\perp\cap\fp$ be rapidly decreasing and have $L^1$-norm less than 1; then there exists a unique global solution $\Xi$ to the $U/K$-system such that $\Xi(tb)=\xi_0(t)$.

\smallskip\noindent{\bf The Dressing action of a simple element}\nopagebreak
\smallskip

Sections 3 and 4 explain how to construct \CSs\ of \gk\ orthogonal co-ordinate systems on $\R^n$ and \ik\ hypersurfaces in $\R^{n+1}$ from solutions of the $\onk$-systems and $\oni$-systems respectively. The dressing action of the loop group on the space of solutions of the $U/K$-system gives rise to various transformations of the corresponding geometric objects. However only the action of the simplest kind of rational loops in $L_-(U^\C)$ are known to give geometrically interesting transforms.  

The dressing action of a rational germ on the space of solutions of the $U/K$-system can be computed by residue calculus. We can thus use Theorems \ref{ak} and \ref{am} to construct geometric transformations of \gk\ orthogonal co-ordinate systems on $\R^n$ and \ik\ hypersurfaces in $\R^{n+1}$. We give the explicit formulae for the dressing actions of the simplest type of rational loops and show that the corresponding transformations of \ik\ hypersurfaces are Ribaucour transformations. 

\smallskip\noindent{\bf Simple elements $p_{\alpha, L}$}\nopagebreak
\smallskip

Let $\sigma$ be the conjugation by $\rho=\I_{n+1,n}$ as in Example \ref{ad} that gives the symmetric space $\frac{U}{K}= \frac{O(2n+1-k, k)}{O(n+1)\times O(n-k, k)}$. Choose a scalar $\alpha\in\R^\times\cup i\R^\times$, and an isotropic line $\ell$ such that either
\begin{equation}\label{ba}
\ell\le\R^{2n+1-k, k}\text{ and }\alpha\in\R^\times,\quad\text{or}\quad \ell\le\R^{n+1}\oplus i\R^{n-k,k}\text{ and }\alpha\in i\R^\times,
\end{equation}
where $\R^\times=\R\setminus\{0\}$. Let $L=\ell^\C$ and suppose in addition that $\rho L\neq L$ (equivalently $\rho L\not\perp L$). Let $\pi_L$ denote the projection onto $L$ away from $\rho L^\perp$. In fact, if $\ell=\langle \vv\rangle$ with $(\vv, \vv)_k=0$ and $(\vv,\rho \vv)_k=1$, then 
\[\pi_L= \vv\vv^T\rho, \quad \pi_{\rho L}= \rho \vv\vv^T.\]
Define the \emph{simple element} $p_{\alpha,L}$ by
\begin{equation}\label{bd}
p_{\alpha,L}(\lambda)=\frac{\lambda-\alpha}{\lambda+\alpha}\pi_L+\pi_{(L\oplus\rho L)^\perp}+\frac{\lambda+\alpha}{\lambda-\alpha}\pi_{\rho L}.
\end{equation}
It is easily checked that $p_{\alpha,L}$ satisfies the $U/K$-reality condition. 

The following is known (\cite{Bruck2002,Donaldson2008}) for the $\frac{\rO(2n-1,1)}{\rO(n)\times \rO(n-1,1)}$-system, and exactly the same proof works for the $\frac{\rO(2n+1-k,k)}{\rO(n+1)\times \rO(n-k,k)}$-system and for the $\frac{\rO(2n-k,k)}{\rO(n)\times \rO(n-k,k)}$-system.

\begin{theorem}\label{av}
Let $\frac{U}{K}=\onk$, $E_\lambda(x)$ the normalized extended frame for a solution $(F,\gamma)$ of the $\frac{U}{K}$-system, $\alpha\in \C$, $L=\ell^\C$ an isotropic line in $\C^{2n+1-k, k}$ such that $\ell$ satisfies \eqref{ba}, and $p_{\alpha,L}$ the simple element defined by \eqref{bd}. Then:
\begin{enumerate}  
\item There is an open subset $B$ of the origin in $\R^n$ such that $E_\alpha^{-1}(x)L \neq \rho(E_\alpha^{-1}(x)L)$ for all $x\in B$, where $\rho(g)= \I_{n+1, n}g\I_{n+1,n}^{-1}$,
\item $\tilde L(x)= E_\alpha^{-1}(x)L$ and $\alpha$ satisfy \eqref{ba}, so there exist $\vec Q(x)\in \C^{n+1}$ and $\vec Z(x)\in \C^n$ such that $\tilde L(x)= \left\langle \begin{pmatrix} \vec Q(x)\\ \vec Z(x)\end{pmatrix}\right\rangle$ with $(\vec Q,\vec Q)_0=-(\vec Z,\vec Z)_k= 2$ and $\bar{\vec Q}=\vec Q$ and $\bar{\vec Z}=\sgn(\alpha^2)\vec Z$, 
\item $\tilde E(x): =E(x) p^{-1}_{\alpha,E^{-1}_\alpha(x) L}$
 is an extended frame for a new solution $(\tilde F, \tilde\gamma)$ of the $U/K$-system, where 
 \[(\tilde F, \tilde \gamma)= (F, \gamma)+ \alpha \vec Z\vec Q^T.\]
\end{enumerate}
 \end{theorem}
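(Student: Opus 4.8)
The plan is to follow the standard machinery for dressing by a simple element $p_{\alpha,L}$ on a $U/K$-system, adapted to the loop $E_\lambda(x)$ being the \emph{normalized} extended frame for $(F,\gamma)$. First I would address statement (1): since $E_\alpha(0) = \I$ (normalization) and $L \neq \rho L$ by hypothesis, the condition $E_\alpha^{-1}(x)L \neq \rho(E_\alpha^{-1}(x)L)$ holds at $x = 0$; continuity of $x \mapsto E_\alpha^{-1}(x)$ and the openness of the transversality condition (non-perpendicularity of $L$ and $\rho L$ is an open condition on lines, and $\rho$ commutes with the relevant structure only up to the action) then give an open neighbourhood $B$ of the origin on which it persists. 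One small point to check here is that $E_\alpha(x)$ is actually defined and holomorphic at $\lambda = \alpha$ — this is where one uses that $E_\lambda \in L_+(U^\C)$, i.e. extends holomorphically to all of $\C$, so $E_\alpha$ makes sense for every $\alpha$.

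For statement (2), the key is that $E_\lambda$ satisfies the $U/K$-reality condition \eqref{bg}: $\overline{E_{\bar\lambda}} = E_\lambda$ and $\sigma(E_{-\lambda}) = E_\lambda$ with $\sigma(g) = \rho g \rho^{-1}$, $\rho = \I_{n+1,n}$. From $\overline{E_{\bar\lambda}} = E_\lambda$ one gets, depending on whether $\alpha \in \R^\times$ or $\alpha \in i\R^\times$, a reality statement relating $\overline{E_\alpha^{-1}}$ to $E_\alpha^{-1}$ (resp. to $E_{-\alpha}^{-1}$), and combining with $\sigma(E_{-\alpha}) = E_\alpha$ one sees that $\tilde L(x) = E_\alpha^{-1}(x) L$ together with $\alpha$ again satisfies the dichotomy \eqref{ba}. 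Having fixed a spanning vector $\binom{\vec Q}{\vec Z}$ of $\tilde L(x)$, the normalizations $(\vec Q, \vec Q)_0 = -(\vec Z,\vec Z)_k = 2$ are arranged by scaling: isotropy of $L$ in $\C^{2n+1-k,k}$ (with the block form $\I_{n+1} \oplus (-J)$ implicit in the ambient form) forces $(\vec Q,\vec Q)_0 = (\vec Z,\vec Z)_k$, and since $E_\alpha \in \rO(n+1) \times \rO(n-k,k)$-type... more precisely $E_\alpha$ preserves the ambient form, this isotropy and the normalization $(\vv,\rho\vv)_k = 1$ transport to $\tilde L$; the reality constraints $\bar{\vec Q} = \vec Q$, $\bar{\vec Z} = \sgn(\alpha^2)\vec Z$ follow from the reality statement for $\tilde L$ derived above. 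One must also verify $(\vec Q,\vec Q)_0 \neq 0$, i.e. that $\tilde L$ is not perpendicular to $\rho \tilde L$ — but that is exactly statement (1).

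For statement (3), define $\tilde E(x,\lambda) = E(x,\lambda)\, p_{\alpha, \tilde L(x)}^{-1}(\lambda)$ with $\tilde L(x) = E_\alpha^{-1}(x)L$. The crux is to show $\tilde E^{-1} \D \tilde E$ has the form $\sum_i (a_i \lambda + [a_i, \tilde v])\,\D x_i$ of a Lax pair \eqref{bn} for some $\tilde v$ valued in $\fa^\perp \cap \fp$; this is a residue computation. Writing $\tilde E^{-1}\D\tilde E = p_{\alpha,\tilde L} (E^{-1}\D E) p_{\alpha,\tilde L}^{-1} - \D p_{\alpha,\tilde L}\, p_{\alpha,\tilde L}^{-1}$ and using $E^{-1}\D E = \theta_\lambda = \sum(a_i\lambda + [a_i,v])\D x_i$, one checks: (a) the result is holomorphic in $\lambda$ away from possible poles at $\lambda = \pm\alpha$; (b) the apparent poles at $\pm\alpha$ cancel — this uses precisely that $\tilde L(x) = E_\alpha^{-1}(x)L$, so that $E_\lambda p_{\alpha,\tilde L}^{-1}$ has removable singularities there (the projector $\pi_{\tilde L}$ is built from the eigenline of $\theta_{-\alpha}$-parallel transport, which is the standard Terng--Uhlenbeck cancellation lemma); (c) it has the right growth at $\lambda = \infty$ to be linear in $\lambda$, since $p_{\alpha,\tilde L}(\infty)$ is the identity up to the fixed projector decomposition and conjugation by it preserves the degree. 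Comparing the $\lambda^1$ and $\lambda^0$ terms then identifies $\tilde v$; reading off the $\lambda^0$ term and projecting gives $(\tilde F,\tilde\gamma) = (F,\gamma) + \pi(m_1)$ where $m_1$ is the residue data of $p_{\alpha,\tilde L}^{-1}$, and an explicit residue evaluation using $\pi_{\tilde L} = \vv\vv^T\rho$ with $\vv = \binom{\vec Q}{\vec Z}$ yields $\pi(m_1) = \alpha\, \vec Z\vec Q^T$ after matching the block structure of $\fa^\perp\cap\fp$ from Example~\ref{ad}. Finally $\tilde E$ inherits the $U/K$-reality condition because both $E$ and $p_{\alpha,\tilde L}$ do (the latter by the remark after \eqref{bd}), so $\tilde E$ is a genuine extended frame.

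The main obstacle is step (3b)–(3c): verifying that conjugation by $p_{\alpha,\tilde L}^{-1}$ produces no poles at $\lambda = \pm\alpha$ and keeps the one-form polynomial of degree one in $\lambda$. This is the heart of the dressing construction and is where the specific choice $\tilde L = E_\alpha^{-1}L$ (rather than an arbitrary line) is forced. The paper explicitly says "exactly the same proof works" as for the $\frac{\rO(2n-1,1)}{\rO(n)\times\rO(n-1,1)}$-system, so I would cite \cite{Bruck2002,Donaldson2008} for this cancellation lemma and confine new work to checking that the block decomposition of $\fa^\perp\cap\fp$ in Example~\ref{ad} is respected — i.e. that the residue $\pi(m_1)$ indeed lands in the subspace parametrized by $(F,\gamma)$ with $\tilde f_{ii} = 0$, which follows from the off-diagonal structure of $\vec Z\vec Q^T$ against the $a_i$.
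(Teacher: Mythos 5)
Your outline is the standard Terng--Uhlenbeck dressing argument and matches the paper's treatment exactly: the paper offers no proof of Theorem \ref{av} at all, simply remarking that the proof for the $\frac{\rO(2n-1,1)}{\rO(n)\times\rO(n-1,1)}$-system in \cite{Bruck2002,Donaldson2008} carries over verbatim, and that known proof is precisely the openness/reality/pole-cancellation-and-residue computation you describe (with the crux, step (3b)--(3c), correctly identified and correctly deferred to the same references). The one small slip is your guess $\I_{n+1}\oplus(-J)$ for the ambient form: it is $\I_{n+1}\oplus J$ with $J=\I_{n-k,k}$, which is exactly what makes isotropy of $\tilde L$ yield $(\vec Q,\vec Q)_0=-(\vec Z,\vec Z)_k$ as in the statement.
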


\smallskip\noindent{\bf Ribaucour transforms}\nopagebreak
\smallskip
 
Let $M$ and $\tilde M$ be two hypersurfaces in $\R^{n+1}$. A diffeomorphism $\phi:M\to \tilde M$ is  a \emph{Ribaucour transform} (cf. \cite{Bruck2002}) if:
\begin{enumerate}
\item[(1)] For each $p\in M$ there is an $n$-sphere $S_{p}$ containing $p$ and $\phi(p)$ such that $M$ and $\tilde M$ are tangent to $S_{p}$  at $p$ and $\phi(p)$ respectively.
\item[(2)] $\D\phi_{p}$ maps eigenvectors of the shape operator of $M$ at $p$ to eigenvectors of the shape operator of $\tilde M$ at $\phi(p)$.
\item[(3)] The tangent line through $p$ in a principal direction $\vv$ meets the tangent line through $\phi(p)$ in the direction of $\D\phi_{p}(\vv)$ at equal distance.
\end{enumerate}

\smallskip\noindent{\it Remarks}
\smallskip
\begin{enumerate}
\item[(i)] The equidistance claim of (3) follows automatically from (1) and (2) provided that the principal directions through $p,\phi(p)$ in fact meet.
\item[(ii)] In classical differential geometry, a {\it sphere congruence\/} in $\R^{n+1}$ is an $n$ parameter family of $n$-spheres ${\mathcal{S}}= \{S_x\, |\, x\in {\mathcal{O}}\}$, where ${\mathcal{O}}$ is an open subset of $\R^n$.  It is easy to see that there are two hypersurfaces $f(x)$ and $\tilde f(x)$ in $\R^{n+1}$ so that $f(x_0), \tilde f(x_0)\in S_{x_0}$ and $S_{x_0}$ is tangent to $f$ and $\tilde f$ at $f(x_0)$ and $\tilde f(x_0)$ respectively.  These are the called {\it envelopes\/} of $\mathcal S$. We also call the resulting map $f(x)\mapsto \tilde f(x)$ a sphere congruence. A Ribaucour transform is thus a sphere congruence that preserves lines of principal curvature.
\item[(iii)]
Let $M$ be a hypersurface in $\R^{n+1}$, and $(\ve_1,\ldots,\ve_{n+1})$ an orthonormal frame on $M$ such that $\ve_1,\ldots,\ve_n$ are principal directions (i.e., unit eigenvectors for the shape operator of $M$). Let $\tilde M$ be another hypersurface, $\phi:M\to \tilde M$  a diffeomorphism, $\tilde\ve_{n+1}$ the unit normal field on $\tilde M$, and $\tilde\ve_i$ is the direction of $\D \phi(\ve_i)$ for $1\leq i\leq n$. Then $\phi$ is a Ribaucour transform if:
\begin{enumerate}
\item[(a)] $\tilde\ve_i$ is a principal direction for $\tilde M$ for $1\leq i\leq n$.
\item[(b)] There exist functions $h_1, \ldots, h_{n+1}$ on $M$ such that
\[\phi(p)+ h_i(p)\tilde\ve_i (p)=p+ h_i (p)\ve_i(p), \qquad 1\leq i\leq n+1\]
for all $p\in M$.   
\end{enumerate}
\end{enumerate}

\begin{theorem} \label{aw}
Let $(F,\gamma)$ be a solution of the $\frac{\rO(2n-1-k, k)}{\rO(n+1)\times \rO(n-k,k)}$-system, $E$ the normalized extended frame for $(F,\gamma)$, $Y$ defined by
 \[\frac{\partial E}{\partial \lambda}E^{-1}\big|_{\lambda=0}=\begin{pmatrix} 0& Y\\ -JY^T&0\end{pmatrix},\]
 where $\vc$ a constant null vector in $\R^{n-k,k}$, and $f_\vc=Y\vc$ (the resulting \ik\ hypersurface in $\R^{n+1}$). Let $p_{\alpha, L}$ be the simple element defined by \eqref{bd}, and $\vec Q, \vec Z$ and $\tilde E$ as in Theorem \ref{av}. Then:
\begin{enumerate}
\item $E_0=\begin{pmatrix}
g_1&0\\
0&g_2
\end{pmatrix}$ and $\tilde E_0=\begin{pmatrix}
\tilde g_1&0\\
0&\tilde g_2
\end{pmatrix}$ for some $g_1,\tilde g_1\in\rO(n+1)$ and $g_2,\tilde g_2\in\rO(n-k,k)$.
\item Write $g_1=(\ve_1,\ldots,\ve_{n+1})$, $\tilde g_1=(\tilde\ve_1,\ldots,\tilde\ve_{n+1})$, $\vu=Jg^{-1}_2\vc=(u_1, \ldots, u_n)^T$, $\vec Q=(q_1,\ldots,q_{n+1})^T$, and $\vec Z=(z_1,\ldots,z_n)^T$. Set 
\[\tilde f_{\vc}:=f_{\vc}-\frac{\vec Z^T \vu}{\alpha}\sum_{j=1}^{n+1}q_j\ve_j.\]
Then 
\begin{enumerate} 
\item $\tilde f_{\vc}$ is \ik,
\item $\{\ve_i\}_{i=1}^n$ and $\{\tilde \ve_i\}_{i=1}^n$ are principal curvature directions of $f_\vc$ and $\tilde f_\vc$ respectively,
\item $R_{\alpha, L}: f_\vc\mapsto \tilde f_\vc$ is a Ribaucour transform, in fact, 
\[\tilde f_\vc-\frac{(\vec Z, \vu)_k}{\alpha q_i}\tilde \ve_i= f_\vc- \frac{(\vec Z, \vu)_k}{\alpha q_i} \ve_i, \qquad 1\leq i\leq n+1.\]
\end{enumerate}
\end{enumerate}
\end{theorem}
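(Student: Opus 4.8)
The plan is to run the same dressing-action computation that produces $\tilde{E}$ in Theorem \ref{av}, but now track what happens to $Y$, and hence to $f_\vc = Y\vc$, when $E$ is replaced by $\tilde{E} = E p_{\alpha, E_\alpha^{-1}L}^{-1}$. First I would compute $\tilde{E}_0$ and $\frac{\partial \tilde{E}}{\partial\lambda}\tilde{E}^{-1}\big|_{\lambda=0}$ directly. Writing $p = p_{\alpha,\tilde{L}}$ with $\tilde{L}(x) = E_\alpha^{-1}(x)L$, we have $\tilde{E}_\lambda = E_\lambda p_{\alpha,\tilde{L}(x)}(\lambda)^{-1}$; evaluating at $\lambda = 0$ and expanding $p(0)^{-1}$ and its $\lambda$-derivative using the explicit projector formula \eqref{bd} with $\tilde L = \langle (\vec Q, \vec Z)^T\rangle$, $(\vec Q,\vec Q)_0 = -(\vec Z,\vec Z)_k = 2$, gives $\tilde{E}_0$ in block-diagonal form (statement (1)) and an explicit expression for $\tilde{Y}$ in terms of $Y$, $E_0 = \mathrm{diag}(g_1,g_2)$, $\vec Q$, $\vec Z$ and $Z := \frac{\partial E}{\partial\lambda}E^{-1}|_{\lambda=0}$. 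Contracting $\tilde{Y}$ against $\vc$ and using $\D Z = \sum_i g_1 e_{ii} Jg_2^{-1}\,\D x_i$ from Proposition \ref{ae}(3) together with $\vu = Jg_2^{-1}\vc$ should produce exactly the formula $\tilde f_\vc = f_\vc - \frac{\vec Z^T\vu}{\alpha}\sum_j q_j\ve_j$ asserted in (2), identifying the $(n+1)$-vector $\sum_j q_j\ve_j = g_1\vec Q$ as a tangent-plus-normal combination.

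Next, for (2)(a) I would invoke Theorem \ref{am}(c) applied to the new solution $(\tilde F,\tilde\gamma)$ given by Theorem \ref{av}(3): since $\tilde{E}$ is an extended frame for $(\tilde F,\tilde\gamma)$ and $\vc$ is still null, $\tilde Y\vc$ is automatically \ik. For (2)(b), the principal directions of $\tilde f_\vc$ are the columns $\tilde\ve_1,\dots,\tilde\ve_n$ of $\tilde g_1$ by the same Theorem \ref{am} machinery (the orthonormal principal frame is read off from the $\rO(n+1)$-block of $\tilde E_0$), and likewise $\ve_1,\dots,\ve_n$ for $f_\vc$; one then checks that $\D\tilde f_\vc$ is diagonal in the $\tilde\ve_i$-frame, which is built into the construction. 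The substantive part is (2)(c): to verify the Ribaucour property I would use the characterization in Remark (iii) — namely exhibit functions $h_i$ with $\tilde f_\vc + h_i\tilde\ve_i = f_\vc + h_i\ve_i$ for $1\le i\le n+1$ — and the claimed identity tells us the right guess is $h_i = \frac{(\vec Z,\vu)_k}{\alpha q_i}$. So the task reduces to the algebraic identity
\[
\tilde f_\vc - f_\vc = \frac{(\vec Z,\vu)_k}{\alpha q_i}(\ve_i - \tilde\ve_i), \qquad 1\le i\le n+1,
\]
which, given the formula $\tilde f_\vc - f_\vc = -\frac{\vec Z^T\vu}{\alpha} g_1\vec Q$, is equivalent to $\ve_i - \tilde\ve_i = q_i\, g_1\vec Q / \|\cdot\|$-type relation, i.e. to an explicit formula for $\tilde g_1$ in terms of $g_1$, $\vec Q$. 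That formula comes from the $\rO(n+1)$-block of $\tilde E_0 = E_0 p(0)^{-1}$: since $p(0)^{-1} = -\pi_{\tilde L} + \pi_{(\tilde L\oplus\rho\tilde L)^\perp} - \pi_{\rho\tilde L}$ restricted to the relevant block acts as $I - \vec Q\vec Q^T/\text{(norm)} - \dots$, one gets $\tilde\ve_i = \ve_i - q_i\, g_1\vec Q/\text{(const)}$, and matching the constant against $\vec Z^T\vu/\alpha$ versus $(\vec Z,\vu)_k/(\alpha q_i)$ closes the argument (note $\vec Z^T\vu = \vec Z^T Jg_2^{-1}\vc$; one must reconcile this with $(\vec Z,\vu)_k$ — here $\vu$ itself already carries a $J$, so the pairing bookkeeping is where care is needed).

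The main obstacle I expect is precisely this bookkeeping: keeping straight the three bilinear forms in play — $(\,,\,)_0$ on $\R^{n+1}$, $(\,,\,)_k$ on $\R^{n-k,k}$, and the Killing-type pairing used to split $\fu$ into $\fk\oplus\fp$ — and making the normalizations $(\vec Q,\vec Q)_0 = -(\vec Z,\vec Z)_k = 2$ and $(\vv,\rho\vv)_k = 1$ from Theorem \ref{av} land the scalar $\frac{(\vec Z,\vu)_k}{\alpha q_i}$ with exactly the right constant and no stray factors of $2$ or sign errors between the real ($\alpha\in\R^\times$) and imaginary ($\alpha\in i\R^\times$) cases of \eqref{ba}. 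A secondary subtlety is checking that $g_1\vec Q$ genuinely decomposes as $(\text{tangent vector}) + (\text{normal component})$ so that the same $h_i$ works for $i = n+1$ (the normal direction) as for $i\le n$; this is what makes it a bona fide sphere congruence rather than just a principal-direction-preserving map. Everything else — flatness, the block structure, the existence of $\tilde Y$ — is a direct transcription of the proof of Theorem \ref{ak}/\ref{am} and of the known simple-factor computation cited from \cite{Bruck2002, Donaldson2008}.
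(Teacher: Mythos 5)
Your proposal follows essentially the same route as the paper's proof: write out $p_{\alpha,E_\alpha^{-1}L}^{-1}(\lambda)$ explicitly in block form, evaluate at $\lambda=0$ to get $\tilde g_1=g_1(\I-\vec Q\vec Q^T)$ and hence $\tilde\ve_i=\ve_i-q_i\,g_1\vec Q$, take the $\lambda$-derivative at $0$ to get $\tilde Y=Y-\alpha^{-1}g_1\vec Q\vec Z^TJg_2^{-1}$, contract with $\vc$, and combine the two displacement formulas to exhibit the equidistance functions $h_i$. The bookkeeping point you flag about $\vec Z^T\vu$ versus $(\vec Z,\vu)_k$ is exactly where care is needed (the paper itself writes $(\vec Z,g_2^{-1}\vc)_k=\vec Z^T\vu$ in the proof), and your appeal to Theorem \ref{am} for parts (2)(a)--(b) matches the paper's implicit use of it.
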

 
\begin{proof}
Note that
\begin{equation}\label{az}
p^{-1}_{\alpha,E_\alpha^{-1}L}(\lambda)=\begin{pmatrix}
  \I+\frac{\alpha^2}{\lambda^2-\alpha^2}\vec Q\vec Q^T&-\frac{\alpha\lambda}{\lambda^2-\alpha^2}\vec Q\vec Z^TJ\\
  \frac{\alpha\lambda}{\lambda^2-\alpha^2}\vec Z\vec Q^T&\I-\frac{\alpha^2}{\lambda^2-\alpha^2}\vec Z\vec Z^TJ
\end{pmatrix}, \qquad J= \I_{n-k,k},
\end{equation}
from which we write
\[\tilde E_\lambda = E_\lambda \begin{pmatrix}  \I+\frac{\alpha^2}{\lambda^2-\alpha^2}\vec Q\vec Q^T&-\frac{\alpha\lambda}{\lambda^2-\alpha^2}\vec Q\vec Z^TJ\\
  \frac{\alpha\lambda}{\lambda^2-\alpha^2}\vec Z\vec Q^T&I-\frac{\alpha^2}{\lambda^2-\alpha^2}\vec Q\vec Q^TJ\end{pmatrix}.\]
Evaluate at $\lambda=0$ to get 
 \begin{equation}\label{be}
 \begin{pmatrix} \tilde g_1& 0\\ 0&\tilde g_2\end{pmatrix}=\begin{pmatrix} g_1&0\\ 0& g_2\end{pmatrix}\, \begin{pmatrix} \I-\vec Q\vec Q^T& 0\\ 0 & \I+\vec Z\vec Z^TJ\end{pmatrix}.
 \end{equation}
 Let $\tilde \ve_i$ and $\ve_i$ denote the $i$th column of $g_1$ and $\tilde g_1$ respectively. By \eqref{be} we have
 \begin{gather}
 \tilde \ve_i= \ve_i -q_i \sum_{j=1}^{n+1} q_j \ve_j, \quad 1\leq i\leq n+1. \label{ax2}
 \end{gather}
 Recall that $\vu=Jg_2^{-1}\vc$. Notice that
\[\frac{\partial \tilde E}{\partial \lambda} \tilde E^{-1}\big|_{\lambda=0}=\begin{pmatrix} 0& \tilde Y\\ -J\tilde Y^T&0\end{pmatrix}, \qquad 
  \frac{\partial E}{\partial\lambda} E^{-1}\big|_{\lambda=0}=\begin{pmatrix} 0& Y\\ -JY^T&0\end{pmatrix}, \]
  where $J=\I_{n-k,k}$. By \eqref{az},
\begin{equation}\label{bf}
\tilde Y= Y - \alpha^{-1} g_1 \vec Q\vec Z^T Jg_2^{-1}.
\end{equation}
Set $\tilde f_\vc =\tilde Y\vc$. Then since $f= Y{\vc}$ and \eqref{bf},
\[\tilde f_\vc= f_\vc - \alpha^{-1} g_1 \vec Q\vec Z^T Jg_2^{-1}\vc.\]
Or equivalently, 
\[\tilde f_\vc-f_\vc = -\alpha^{-1}g_1\vec Q (\vec Z, g_2^{-1}\vc)_k =-\frac{\vec Z^T \vu}{\alpha} \sum_{i=1}^{n+1} q_i \ve_i.\]
By \eqref{ax2}, 
\[\tilde \ve_i -\ve_i= -q_i\sum_{j=1}^{n+1} q_j \ve_j, \qquad 1\leq i\leq n+1.\]
Hence 
$\tilde f_\vc + f_i \tilde \ve_i = f_\vc + f_i \ve_i$ for $1\leq i\leq n+1$, where $f_i= -\frac{\vec Z^T \vec u}{\alpha q_i}$.\qed
\end{proof}

\smallskip

As a consequence of Theorems \ref{fd} and \ref{av} we have:

\begin{corollary}
 Let $\{f_1,\ldots,f_n\}$ be  a \CSI\ in $\R^{n+1}$,  and $p_{\alpha,L}$ a simple element defined by \eqref{bd} so that $\alpha, L$ satisfying condition \eqref{ba}. Then there is a new \CS\ of \ik\ hypersurfaces $\{\tilde f_1,\ldots,\tilde f_n\}$ so that $f_i(x)\mapsto\tilde f_i(x)$ is a Ribaucour transform for each $i$.  Moreover, the radius of $S_i(x)$ depends only on $x$ but not on $i$, where $S_i(x_0)$ is the sphere tangent to $f_i$ and $\tilde f_i$ at $f_i(x_0)$ and $\tilde f_i(x_0)$ respectively.
 \end{corollary}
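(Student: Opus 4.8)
The plan is to reduce the statement to repeated application of the Ribaucour construction of Theorem~\ref{aw}, using the fact that a single simple element $p_{\alpha,L}$ acts simultaneously on the whole Combescure sequence because the null basis $\vC$ and the frame $E$ are shared data. More precisely, start from the given \CSI\ $\{f_1,\ldots,f_n\}$. By Theorem~\ref{fd} (translating so that $f_i(0)=0$, which does not affect the Ribaucour property) there is a single solution $(F,\gamma)$ of the $\oni$-system and a single null basis $\vC=\{\vc_1,\ldots,\vc_n\}$ of $\R^{n-k,k}$ with $f_i=Y\vc_i$, where $Y$ is built from the normalized extended frame $E$ of $(F,\gamma)$ as in Theorem~\ref{am}(b). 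The point is that $(F,\gamma)$, $E$ and hence $Y$ are \emph{the same} for every member of the sequence; only the null vector $\vc_i$ changes.

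Next I would apply the dressing action of the simple element $p_{\alpha,L}$ (with $\alpha,L$ satisfying \eqref{ba}) to this single solution $(F,\gamma)$. By Theorem~\ref{av} this produces a new solution $(\tilde F,\tilde\gamma)=(F,\gamma)+\alpha\vec Z\vec Q^T$ with normalized extended frame $\tilde E=E\,p^{-1}_{\alpha,E_\alpha^{-1}L}$, and from it a new matrix map $\tilde Y$ satisfying $\tilde Y=Y-\alpha^{-1}g_1\vec Q\vec Z^TJg_2^{-1}$ exactly as in \eqref{bf}. Setting $\tilde f_i:=\tilde Y\vc_i$ for each $i$, Theorem~\ref{am}(d) applied to $(\tilde F,\tilde\gamma)$ and the \emph{same} null basis $\vC$ shows that $\{\tilde f_1,\ldots,\tilde f_n\}$ is again a \CSI\ in $\R^{n+1}$. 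For each fixed $i$, the pair $(f_i,\tilde f_i)$ is precisely the output of Theorem~\ref{aw} with $\vc=\vc_i$, so $f_i(x)\mapsto\tilde f_i(x)$ is a Ribaucour transform; write $(u_{i1},\ldots,u_{in})^T=Jg_2^{-1}\vc_i$ for the metric field, so that the explicit sphere-congruence identity from Theorem~\ref{aw}(2)(c) reads
\[
\tilde f_i-\frac{(\vec Z,\vu_i)_k}{\alpha q_j}\,\tilde\ve_j = f_i-\frac{(\vec Z,\vu_i)_k}{\alpha q_j}\,\ve_j,\qquad 1\le j\le n+1,
\]
where the principal frames $\ve_j,\tilde\ve_j$ are the columns of $g_1,\tilde g_1$ and are shared across the whole sequence (again because $E$ is).

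For the radius claim, I would compute the radius of $S_i(x_0)$ directly from the configuration in Theorem~\ref{aw}: the sphere $S_i(x_0)$ is tangent to $f_i$ at $f_i(x_0)$ with the same unit normal $\ve_{n+1}(x_0)$ that is common to all $f_i$, and tangent to $\tilde f_i$ at $\tilde f_i(x_0)$ with common unit normal $\tilde\ve_{n+1}(x_0)$. From \eqref{ax2} one has $\tilde f_i-f_i=-\alpha^{-1}(\vec Z,\vu_i)_k\sum_j q_j\ve_j = -\alpha^{-1}(\vec Z,\vu_i)_k\,g_1\vec Q$, a vector whose length is $|\alpha^{-1}(\vec Z,\vu_i)_k|\,\|\vec Q\|$, and whose component along $\ve_{n+1}$ is $-\alpha^{-1}(\vec Z,\vu_i)_k q_{n+1}$. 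The center of $S_i(x_0)$ lies on the normal line at $f_i(x_0)$, say at distance $r_i$, and the standard chord computation (center equidistant from $f_i(x_0)$ and $\tilde f_i(x_0)$, with $\tilde f_i(x_0)$ on the sphere with the prescribed tangent plane) gives $r_i = \tfrac12\|\tilde f_i-f_i\|^2 / \langle \tilde f_i-f_i,\ve_{n+1}\rangle = \tfrac12\|\vec Q\|^2/q_{n+1}$, which is manifestly independent of $i$ since $\vec Q$ and $q_{n+1}$ depend only on $x$ (through $E_\alpha(x)$) and not on which $\vc_i$ was used. Hence all the spheres $S_i(x_0)$ have the same radius.

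The main obstacle I expect is the last step: carefully extracting the radius of $S_i(x_0)$ and verifying that the $i$-dependent factor $(\vec Z,\vu_i)_k$ cancels. One must check that the chord $\tilde f_i(x_0)-f_i(x_0)$ is indeed a scalar multiple (depending on $i$) of the \emph{same} vector $g_1(x_0)\vec Q(x_0)$, and that the two tangent planes at the endpoints of the chord are the translates of $\ve_{n+1}(x_0)^\perp$ and $\tilde\ve_{n+1}(x_0)^\perp$ respectively with $\ve_{n+1},\tilde\ve_{n+1}$ again $i$-independent; both facts follow from \eqref{ax2} and the shared-frame structure, but they need to be stated. Everything else is a bookkeeping assembly of Theorems~\ref{am}, \ref{av}, \ref{fd} and \ref{aw}.
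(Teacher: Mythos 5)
Your construction of the new sequence and of the Ribaucour maps is correct and is exactly the route the paper intends (the paper offers no written proof beyond citing Theorems \ref{fd} and \ref{av}): a single solution $(F,\gamma)$, frame $E$ and map $Y$ serve the whole sequence, the dressing by $p_{\alpha,L}$ is performed once, and Theorem \ref{aw} is then read off for each $\vc_i$ with the shared data $\vec Q,\vec Z,g_1,\tilde g_1$.

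The radius computation, however, breaks down at exactly the step you flagged as the main obstacle. Writing $\tilde f_i - f_i = c_i\, g_1\vec Q$ with $c_i$ the $i$-dependent scalar $-\alpha^{-1}(\vec Z,\vu_i)_k$ (or $-\alpha^{-1}\vec Z^T\vu_i$, depending on which of the paper's two conventions one follows), the chord formula gives
\begin{equation*}
r_i \;=\; \frac{\|\tilde f_i-f_i\|^2}{2\,(\tilde f_i-f_i,\ve_{n+1})_0}
\;=\;\frac{c_i^2\,\|\vec Q\|^2}{2\,c_i\,q_{n+1}}
\;=\;\frac{c_i\,\|\vec Q\|^2}{2\,q_{n+1}}
\;=\;\frac{c_i}{q_{n+1}},
\end{equation*}
using $(\vec Q,\vec Q)_0=2$. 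The numerator is quadratic in $c_i$ while the denominator is only linear, so the $i$-dependent factor does \emph{not} cancel; your claimed value $r_i=\tfrac12\|\vec Q\|^2/q_{n+1}$ drops a factor of $c_i$. The corrected value agrees with Theorem \ref{aw}(2)(c) at $j=n+1$, which exhibits the centre of $S_i(x)$ as $f_i-\frac{(\vec Z,\vu_i)_k}{\alpha q_{n+1}}\ve_{n+1}$ and hence the radius as $\left|\frac{(\vec Z,\vu_i)_k}{\alpha q_{n+1}}\right|$. Since $(\vec Z,\vu_i)_k=(g_2^{-T}\vec Z)^T\vc_i$ genuinely varies as $\vc_i$ runs over a basis, the ``moreover'' clause does not follow from the computation you give: you would need a further argument for why $(\vec Z,\vu_i)_k$ is independent of $i$, or a reinterpretation of which quantity associated to the sphere congruence is being asserted to depend only on $x$. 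As written, your proof establishes everything except this last claim.
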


 Ribaucour transforms of a \CS\ of \ik\ hypersurfaces in $\R^{n+1}$ can also be constructed by solving a system of compatible ODEs:

\begin{theorem}\label{cb}
Let $f=\{f_1,\dots,f_n\}$ be a \CS\ of \ik\ hypersurfaces in $\R^{n+1}$ with $\I_{f_1}=\sum_{i=1}^n u_i^2\D x_i^2$, $f_i(0)=0$, and $f_{x_i}(0)$ is parallel to $\vv_i$ for $1\leq i\leq n$, where $\{\vv_1,\ldots,\vv_{n+1}\}$ is the standard basis of $\R^{n+1}$. Let $(F,\gamma)$ denote the solution of the $\oni$-system constructed from $f$ as in Theorem \ref{fd}, i.e.,  $F,\gamma$ are defined by \eqref{af}. Let $\theta_\lambda$ the Lax pair \eqref{cf} for $(F,\gamma)$. Given $\alpha\in \R^{\times}\times i\R^{\times}$ and $\ell=\langle \vv\rangle$ satisfying \eqref{ba}, then the following first order system for $\vec y:\R^n\to \C^{2n}$ is solvable and has a unique solution:
\[\D \vec y= -\theta_\alpha \vec y, \qquad \vec y(0)=\vv.\]
Moreover,
\begin{enumerate}
\item $\langle \vec y(x)\rangle $ satisfies condition \eqref{ba}, so there exists $\vec Q(x)\in \R^{n+1}$ and $\vec Z(x)\in \R^n$ if $\alpha\in \R$ and in $i\R^n$ if $\alpha\in i\R^{\times}$ such that $\langle \vec y \rangle = \left\langle\begin{pmatrix} \vec Q\\ \vec Z\end{pmatrix}\right\rangle$ and $(\vec Q,\vec Q)_0=-(\vec Z,\vec Z)_k= 2$,
\item $\tilde f_i:= f_i- \frac{\vec Z^T \vu}{\alpha} \sum_{j=1}^{n+1} q_j \ve_j$ defines a Ribaucour transform $R_{\alpha, L}$, where $\ve_i= \frac{f_{x_i}}{u_i}$ for $1\leq i\leq n$, $\ve_{n+1}$ is the unit normal for $f$, $\vec Q=(q_1, \ldots, q_{n+1})^T$, and $\vu=(u_1, \ldots, u_n)^T$,
\item $\{\tilde f_1, \ldots, \tilde f_n\}$ is a \CSI.
\end{enumerate}
\end{theorem}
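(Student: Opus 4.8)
The plan is to identify the solution $\vec y$ of the linear system with $E_\alpha^{-1}\vv$, where $E$ is the \emph{normalized} extended frame for the solution $(F,\gamma)$ of the $\oni$-system; once this is done, all three assertions follow by feeding $\langle\vec y(x)\rangle=E_\alpha^{-1}(x)L$ into Theorems \ref{av} and \ref{aw}. First I would check that the system $\D\vec y=-\theta_\alpha\vec y$ is compatible: since $(F,\gamma)$ solves the $\oni$-system, Proposition \ref{cc} makes $\theta_\alpha$ flat, so on the simply connected domain there is a unique solution with $\vec y(0)=\vv$. Next I would observe that the normalization $f_i(0)=0$, $f_{x_i}(0)\parallel\vv_i$ forces the common orthonormal principal curvature frame $(\ve_1,\ldots,\ve_{n+1})$ of $\{f_1,\ldots,f_n\}$ to equal $\I$ at the origin, so that the extended frame underlying Theorems \ref{fd} and \ref{am} is the normalized one, $E(0,\lambda)=\I$. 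Then $E_\alpha^{-1}\vv$ satisfies $\D(E_\alpha^{-1}\vv)=-\theta_\alpha(E_\alpha^{-1}\vv)$ and has value $\vv$ at the origin, so by uniqueness $\vec y=E_\alpha^{-1}\vv$ and $\langle\vec y(x)\rangle=E_\alpha^{-1}(x)L=\tilde L(x)$ in the notation of Theorem \ref{av}. Assertion (1) is then immediate from Theorem \ref{av}(1)--(2): on a neighbourhood $B$ of the origin, $\tilde L(x)\neq\rho\tilde L(x)$, the line $\tilde L(x)$ and $\alpha$ satisfy \eqref{ba}, and there is a spanning vector $(\vec Q(x),\vec Z(x))^T$ with $(\vec Q,\vec Q)_0=-(\vec Z,\vec Z)_k=2$ and $\bar{\vec Q}=\vec Q$, $\bar{\vec Z}=\sgn(\alpha^2)\vec Z$, i.e.\ $\vec Z$ real when $\alpha\in\R^\times$ and imaginary when $\alpha\in i\R^\times$. (Up to a common sign these $\vec Q,\vec Z$ are those of Theorems \ref{av} and \ref{aw}, and the sign is irrelevant since every formula below is bilinear in $(\vec Q,\vec Z)$.)

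For (2) and (3) I would invoke Theorem \ref{aw}. By Theorem \ref{fd} there is a null basis $\vC=\{\vc_1,\ldots,\vc_n\}$ with $f_i=Y\vc_i$, where $Y$ is built from $\frac{\partial E}{\partial\lambda}E^{-1}\big|_{\lambda=0}$ as in Theorem \ref{am}(b). Applying Theorem \ref{aw} with $\vc=\vc_i$ and the simple element $p_{\alpha,L}$ gives, for each $i$: that $\tilde f_i=f_i-\frac{\vec Z^T\vu_i}{\alpha}\sum_{j=1}^{n+1}q_j\ve_j$, with $\vu_i=Jg_2^{-1}\vc_i$ the metric field of $f_i$ compatible with the Combescure structure, is \ik; that $\{\ve_j\}_{j\le n}$ and $\{\tilde\ve_j\}_{j\le n}$, where $\tilde\ve_j=\ve_j-q_j\sum_k q_k\ve_k$ as in \eqref{ax2}, are the principal curvature frames of $f_i$ and $\tilde f_i$; and that $R_{\alpha,L}\colon f_i\mapsto\tilde f_i$ is a Ribaucour transform, with the explicit sphere-congruence identity of Theorem \ref{aw}(2)(c). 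This is assertion (2). For (3) I would note that the dressed frame $\tilde E=Ep^{-1}_{\alpha,E_\alpha^{-1}L}$ has block-diagonal part $(\tilde g_1,\tilde g_2)$ independent of $i$, so $\tilde\ve_j$ is common to the whole family and $(\tilde f_i)_{x_j}=\tilde u_{ij}\tilde\ve_j$ is parallel to $(\tilde f_{i+1})_{x_j}$; and, by \eqref{be}, $\tilde g_2=g_2(\I+\vec Z\vec Z^TJ)$ with $\I+\vec Z\vec Z^TJ\in\rO(n-k,k)$ (hence invertible), so the new metric fields $\tilde{\vu}_i=J\tilde g_2^{-1}\vc_i$ are $M\vu_i$ for the $i$-independent invertible matrix $M=J(\I+\vec Z\vec Z^TJ)^{-1}J$, and therefore $\{\tilde{\vu}_i\}_i$ is linearly independent at each point of $B$ exactly where $\{\vu_i\}_i$ is, which is everywhere since $\{f_i\}$ is a Combescure sequence. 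Hence $\{\tilde f_1,\ldots,\tilde f_n\}$ is a \CSI.

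The one genuinely substantive step --- everything else being a citation of Theorems \ref{av}, \ref{aw}, \ref{fd}, \ref{am} and Proposition \ref{cc} --- is the identification $\vec y=E_\alpha^{-1}\vv$: this is what lets the Ribaucour transform of Theorem \ref{aw}, which a priori is defined through the entire $\lambda$-loop of extended frames via the dressing action, be computed by integrating the single linear ODE $\D\vec y=-\theta_\alpha\vec y$ at the one value $\lambda=\alpha$. Beyond that the only things to watch are the alignment of the two normalizations (the normalized extended frame, and $f_{x_i}(0)\parallel\vv_i$), the short observation that $\I+\vec Z\vec Z^TJ$ is invertible on $B$ (used for (3)), and keeping straight the several roles of $\vu$ and the contraction $\vec Z^T\vu$ versus $(\vec Z,\vu)_k$ in the relevant statements.
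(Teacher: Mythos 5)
Your proposal is correct and follows essentially the same route as the paper: the key step in both is the identification $\langle\vec y(x)\rangle=E_\alpha^{-1}(x)L=\tilde L(x)$, after which solvability comes from the flatness of $\theta_\alpha$ and assertions (1)--(3) are read off from Theorems \ref{av} and \ref{aw}. The paper compresses the remainder into ``the rest follows,'' whereas you supply the omitted details (the normalization matching, and the invertibility of $\I+\vec Z\vec Z^TJ$ for the linear independence of the new metric fields), all of which check out.
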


\begin{proof}
Note that Theorem \ref{av}(2) says that $\tilde L= E_\alpha L$, where $E$ is an extended frame of $(F,\gamma)$. Then 
\[\D \tilde L= -E_\alpha^{-1}\D E_\alpha E_\alpha^{-1} L= -\theta_\alpha \tilde L.\]
This gives a system of ODE $\D \vec y= -\theta_\alpha \vec y$.  This system has $n$ independent solutions if and only if $\theta_\alpha$ is flat.  The rest follows.
\end{proof}

Let $R_{\alpha, L}$ and $C_{\vec B}$ denote the Ribaucour and Combescure transforms for \CSs\ of \ik\ hypersurfaces in $\R^{n+1}$. It follows from the constructions of these transforms commute, i.e.,
\[C_{\vec B}\circ R_{\alpha, L}= R_{\alpha, L}\circ C_{\vec B}.\]

\smallskip\noindent{\bf Scaling transforms}\nopagebreak
\smallskip

We review the scaling transform on the space of solutions of the $U/K$-system (cf. \cite{Terng2000}). Let $\Aut(L)$ be the group of automorphisms of the group $L$ defined in the beginning of this section, and $\rho:\R^+\to \Aut(L)$ the group homomorphism defined by 
\[(\rho(r)(g))(\lambda) = g(r\lambda).\]
It is easy to see that $\rho(r)(L_\pm)\subset L_\pm$. The multiplication for the semi-direct product $\R^+\times_\rho L_-$ is
\[(r_1, g_1)(r_2, g_2)= (r_1r_2, g_1 (\rho(r_1) (g_2))).\]
The $\R^+$-action on the solutions of the $U/K$-system is defined as follows: If $r\in \R^+$ and $v$ is a solution of the $U/K$-system, then $\tilde v(x)= r^{-1}v(rx)$ is again a solution and the normalized frames are related by
\[\tilde E(x,\lambda)= E(r^{-1} x, r\lambda).\]
Then $r\ast v= \tilde v$ and $r\ast E= \tilde E$ define an $\R^+$-action on the space of solutions and the space of normalized extended frames of the $U/K$-system.  

It is proved in \cite{Terng2000} that the $\R^+$-action and the $L_-$-action on the spaces of solutions and normalized extended frames extend to an action of the semi-direct product $\R^+\times_\rho L_-$ on these spaces.  Since $(r,1)(1, g)(r^{-1}, 1)= (1, \rho(r)(g))$, we have
\begin{equation} \label{bh}
r\ast (g\ast (r^{-1}\ast v))= (\rho(r)(g))\ast v, \qquad r\ast (g\ast (r^{-1}\ast E))= (\rho(r)(g))\ast E,
\end{equation}
where $v$ is a solution of the $U/K$-system and $E$ is its normalized extended frame. 

\smallskip\noindent{\bf Lie Transforms}\nopagebreak
\smallskip

As a consequence of the $\R^+$-action and Theorem \ref{am} we get,

\begin{theorem}\label{ce}
If $f=\{f_1,\ldots,f_n\}$ is a \CS\ of \ik\ hypersurfaces in $\R^{n+1}$ and $r\in\R^+$, then $\tilde f=\{\tilde f_1,\ldots,\tilde f_n\}$ is a \CSI\ in $\R^{n+1}$, where $\tilde f_i(x)= r f_i(r^{-1}x)$.   We denote this transform by $S_r$. 
\end{theorem}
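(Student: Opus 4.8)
The plan is to realise the scaling $S_r$ on the soliton side as the $\R^+$-action on solutions of the $\oni$-system and then transport it back through the correspondence of Theorem \ref{am}.

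First I would normalise. After replacing each $f_i$ by $f_i-f_i(0)$ — an independent translation of the members of the sequence that changes neither the derivatives $(f_i)_{x_j}$, nor the metric fields $\vu_i$, nor the line-of-curvature/\gk\ conditions — we may assume $f_i(0)=0$ for all $i$; and since the assertion ``$\{rf_i(r^{-1}\,\cdot\,)\}$ is a \CSI'' is itself insensitive to such translations, this loses nothing. Theorem \ref{fd} then produces a solution $(F,\gamma)$ of the $\oni$-system and a null basis $\vC=\{\vc_1,\dots,\vc_n\}$ of $\R^{n-k,k}$ with $f_i=Y\vc_i$, where $Y$ is read off from the normalised extended frame $E$ of $(F,\gamma)$ via Theorem \ref{am}(b).

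Next I would apply the $\R^+$-action. The solution $r\ast(F,\gamma)$ again solves the $\oni$-system, and by the description of the $\R^+$-action recalled above its normalised extended frame is $\tilde E(x,\lambda)=E(r^{-1}x,r\lambda)$. Setting $\lambda=0$ gives $\tilde E_0(x)=E_0(r^{-1}x)$, so the block decomposition of Theorem \ref{am}(a) for $r\ast(F,\gamma)$ is merely that of $(F,\gamma)$ precomposed with $x\mapsto r^{-1}x$ — in particular the null basis is untouched. Differentiating $\tilde E(x,\lambda)=E(r^{-1}x,r\lambda)$ in $\lambda$ (the chain rule contributing one factor $r$ from $\partial_\lambda(r\lambda)$) and evaluating at $\lambda=0$ should give
\[\left.\frac{\partial\tilde E}{\partial\lambda}\,\tilde E^{-1}\right|_{\lambda=0}(x)=r\left.\frac{\partial E}{\partial\lambda}\,E^{-1}\right|_{\lambda=0}(r^{-1}x),\]
so that $\tilde Y(x)=rY(r^{-1}x)$ in the notation of Theorem \ref{am}(b). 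I would then invoke Theorem \ref{am}(c)--(d) for $r\ast(F,\gamma)$ with the \emph{same} null basis $\vC$: this shows $\tilde Y\vC=\{\tilde Y\vc_1,\dots,\tilde Y\vc_n\}$ is a \CSI\ in $\R^{n+1}$, and $\tilde Y(x)\vc_i=rY(r^{-1}x)\vc_i=rf_i(r^{-1}x)=\tilde f_i(x)$, which is exactly the claim.

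The only step needing any care is the bookkeeping of the two simultaneous rescalings $x\mapsto r^{-1}x$ and $\lambda\mapsto r\lambda$ when differentiating at $\lambda=0$ to pin down $\tilde Y$ — hardly a serious obstacle. Should a self-contained argument be preferred, the statement can also be checked directly without the loop-group machinery: one computes $(\tilde f_i)_{x_j}(x)=(f_i)_{x_j}(r^{-1}x)$, whence $\tilde f_i$ has the principal directions of $f_i$ pulled back by $x\mapsto r^{-1}x$ and induced metric $\sum_j u_{ij}(r^{-1}x)^2\,\D x_j^2$, which is \gk; the relations $(\tilde f_i)_{x_j}\parallel(\tilde f_{i+1})_{x_j}$ together with the choice of metric fields $\tilde\vu_i(x)=\vu_i(r^{-1}x)$ then make the Combescure condition, the normalisation \eqref{fa}, and the pointwise independence of $\tilde\vu_1,\dots,\tilde\vu_n$ immediate.
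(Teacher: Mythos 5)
Your proposal is correct and follows essentially the same route as the paper: the theorem is stated there precisely as a consequence of the $\R^+$-action on normalized extended frames ($\tilde E(x,\lambda)=E(r^{-1}x,r\lambda)$) combined with Theorems \ref{fd} and \ref{am}, and your chain-rule computation giving $\tilde Y(x)=rY(r^{-1}x)$ with the same null basis $\vC$ is exactly the detail the paper leaves implicit. The supplementary direct verification via $(\tilde f_i)_{x_j}(x)=(f_i)_{x_j}(r^{-1}x)$ is also sound and makes the statement transparent without the loop-group machinery.
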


The scaling transform exists for the Sine-Gordon equation $q_{xt}= \sin q$: $\tilde q(x,t)= q(rx, r^{-1}t)$ is again a solution of the SGE if $q$ is. The Gauss--Codazzi equation for $K=-1$ surfaces in $\R^3$ is the SGE, and the scaling transformation induces a transformation of surfaces in $\R^3$ with $K=-1$. Such transformations were constructed by Lie. We therefore call the transformation given in Theorem \ref{ce} the \emph{Lie transform} for \ik\ hypersurfaces. 

It follows from $\rho(r)(p_{\alpha,L})= p_{r^{-1}\alpha, L}$ and \eqref{bh} that we have:

\begin{corollary}
Let $S_r$ and $R_{\alpha, L}$ denote Lie transform and the Ribaucour transform for \CSIs in $\R^{n+1}$ respectively. Then $S_r\circ R_{\alpha, L} \circ S_{r^{-1}}= R_{r^{-1}\alpha, L}$.   
\end{corollary}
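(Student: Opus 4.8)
The plan is to lift the asserted identity to the level of solutions of the $\oni$-system and their normalized extended frames, where both $S_r$ and $R_{\alpha,L}$ are restrictions of group actions, and then to invoke the semi-direct product formula \eqref{bh}.

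First I would recall the dictionary established by Theorems \ref{fd} and \ref{am}: a \CSI\ $\{f_1,\ldots,f_n\}$ in $\R^{n+1}$ normalized by $f_i(0)=0$ is, together with a choice of null basis $\vC=\{\vc_1,\ldots,\vc_n\}$ of $\R^{n-k,k}$, the same data as a solution $(F,\gamma)$ of the $\oni$-system, via $f_i=Y\vc_i$ with $Y$ read off from $\frac{\partial E}{\partial\lambda}E^{-1}\big|_{\lambda=0}$ for the normalized extended frame $E$ of $(F,\gamma)$. Under this dictionary, Theorem \ref{ce} together with the discussion of the scaling transform identifies $S_r$ with the $\R^+$-action $v\mapsto r\ast v$ on solutions --- whose effect on frames is $E(x,\lambda)\mapsto E(r^{-1}x,r\lambda)$, with the null basis $\vC$ left fixed --- while Theorem \ref{aw} (through Theorem \ref{av}) identifies $R_{\alpha,L}$ with the dressing action $v\mapsto p_{\alpha,L}\ast v$ of the simple element \eqref{bd}, again with $\vC$ unchanged. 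One should check that each of the three operations preserves the normalization $E(0,\lambda)=\I$ --- immediate for the $\R^+$-action, and true for dressing by $p_{\alpha,L}$ since $p_{\alpha,L}\in L_-(U^\C)$ --- so that their composition stays inside the normalized picture and the base point $f_i(0)=0$ is preserved throughout.

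Granting these identifications the computation is short. Applying \eqref{bh} with $g=p_{\alpha,L}$ gives
\[r\ast\bigl(p_{\alpha,L}\ast(r^{-1}\ast v)\bigr)=\bigl(\rho(r)(p_{\alpha,L})\bigr)\ast v,\]
and from the definition \eqref{bd},
\[(\rho(r)(p_{\alpha,L}))(\lambda)=p_{\alpha,L}(r\lambda)=\frac{\lambda-r^{-1}\alpha}{\lambda+r^{-1}\alpha}\pi_L+\pi_{(L\oplus\rho L)^\perp}+\frac{\lambda+r^{-1}\alpha}{\lambda-r^{-1}\alpha}\pi_{\rho L}=p_{r^{-1}\alpha,L}(\lambda),\]
where $r\in\R^+$ ensures that $r^{-1}\alpha$ still satisfies the sign condition \eqref{ba} and $L$ (hence $\rho L$ and the projections) is unchanged, so $p_{r^{-1}\alpha,L}$ is again a simple element of the form \eqref{bd}. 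Pulling this back through the dictionary, $S_r\circ R_{\alpha,L}\circ S_{r^{-1}}$ acts on a \CSI\ exactly as $R_{r^{-1}\alpha,L}$, which is the claim.

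The main obstacle is not the algebra above but the bookkeeping behind the dictionary: one must confirm that the geometrically defined transforms $S_r$ and $R_{\alpha,L}$ of \CSIs\ are precisely the push-forwards of the algebraic $\R^+$- and dressing actions on solutions of the $\oni$-system, that all three operations leave the chosen null basis $\vC$ fixed, and that the base-point and $E(0,\lambda)=\I$ normalizations are mutually compatible under composition, so that an equality of solutions (equivalently of normalized extended frames) forces the equality of the associated sequences of \ik\ hypersurfaces. Once this is pinned down, the identity follows from \eqref{bh} and the elementary evaluation of $\rho(r)(p_{\alpha,L})$ displayed above.
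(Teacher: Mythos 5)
Your proposal is correct and follows exactly the paper's route: the identity is the semi-direct product relation \eqref{bh} applied to $g=p_{\alpha,L}$, combined with the elementary evaluation $\rho(r)(p_{\alpha,L})=p_{r^{-1}\alpha,L}$, then transported back to hypersurfaces via the dictionary of Theorems \ref{am}, \ref{fd}, \ref{aw} and \ref{ce}. The paper states this in a single line; your extra care about the normalization $E(0,\lambda)=\I$, the fixed null basis, and the sign condition \eqref{ba} is sound bookkeeping but not a different argument.
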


Ribaucour transforms are defined for submanifolds with flat normal bundle (cf. \cite{Bruck2002}): Let $M, \tilde M$ be $n$-dimensional submanifolds in $\R^{n+m}$ with flat normal bundle, and $(\ve_1, \ldots, \ve_n)$ an orthonormal frame on $M$ such that $\ve_1, \ldots, \ve_n$ are common eigenvectors for the shape operators of $M$, and $\ve_{n+1}, \ldots, \ve_{n+m}$ are a parallel normal field to $M$. A Ribaucour transformation from $M$ to $\tilde M$ is a bundle morphism $P:\nu(M)\to \nu(\tilde M)$ which covers a diffeomorphism $\phi:M\to \tilde M$ satisfying the following conditions:
\begin{enumerate}
\item For $1\leq i\leq n$,  $\D \phi_{p}(\ve_i(p))$ is a common eigenvector of the shape operators of $\tilde M$ at $\phi(p)$,   
\item $(\tilde \ve_{n+1}, \ldots, \tilde \ve_{n+m}):=(P(\ve_{n+1}), \ldots, P(\ve_{n+m}))$ is an orthonormal parallel normal frame on $\tilde M$.
\item There exist smooth functions $r_i$ on $M$ such that
\[p+ r_i(p) \ve_i(p)= \phi(p)+ r_i(p) \tilde \ve_i(p), \qquad 1\leq i\leq n+m,\]
where $\tilde \ve_i$ is the direction of $\D \phi (\ve_i)$ for $1\leq i\leq n$.
\end{enumerate}

It follows that if $\phi$ is Ribaucour, then $M$ and $\tilde M$ envelop $m$ congruences of $n$-spheres: fix $n+1\leq i\leq n+m$, the $i$th congruence is the family of $n$-spheres $S_{p}$ in $\R^{n+m}$ with centres $p+r_i(p)\ve_i(p)$ and radii $|r_i(p)|$.

Ribaucour and Lie transforms can similarly be constructed for Combescure sequences of \ik\ $n$-submanifolds in $\R^{n+m}$ as for \CSIs\ in $\R^{n+1}$.\\

The first author was supported in part by an NSF Advance Grant. The second author was supported in part by NSF Grant DMS-0707132.

\end{document}